\newtheorem{theorem}{Theorem}
\newtheorem{lemma}{Lemma}
\newtheorem{definition}{Definition}
\newtheorem{proposition}{Proposition}
\newtheorem{corollary}{Corollary}
\begin{document}
{\selectlanguage{english}
\binoppenalty = 10000 %
\relpenalty   = 10000 %

\pagestyle{headings} \makeatletter

\renewcommand{\@oddhead}{\raisebox{0pt}[\headheight][0pt]{\vbox{\hbox to\textwidth{{Lindstr\"{o}m theorem for intuitionistic propositional logic}\hfill \strut\thepage}\hrule}}}
\makeatother

\title{A Lindstr\"{o}m theorem for intuitionistic propositional logic}

\author{Guillermo Badia}

\address{ Department of Knowledge-Based Mathematical Systems\\
Johannes Kepler University\\
Linz, Austria 
}

\address{
School of Historical and Philosophical Inquiry\\
University of Queensland\\
Brisbane, Australia
}
\email{guillebadia89@gmail.com}

\author{Grigory Olkhovikov}

\address{ Department of Philosophy \\
Ruhr University Bochum\\
Bochum, Germany
}
\email{grigory.olkhovikov@rub.de}

\date{}
\maketitle

\begin{abstract}
It is shown that propositional intuitionistic  logic is the maximal (with respect to expressive power) abstract logic satisfying a certain topological property reminiscent of compactness, the Tarski union property and preservation under asimulations. 

\bigskip

\emph{Keywords:} Lindstr\"{o}m theorem, intuitionistic logic, abstract model theory, asimulations.

\bigskip

\emph{Math subject classification:} 	03C95,  		03B55.
\end{abstract}
\section{Introduction}

The well-known characterization of classical first-order logic by
Per Lindstr\"{o}m (published as \cite{lin}) has had a great deal
of repercussion in contemporary logic. The key development
inspired by that paper was the introduction of a notion of
``extended first-order logic" that encompassed a great number of
expressive extensions of first order logic\footnote{This was soon
replaced by the now more common term ``abstract logic''. In fact,
probably the term ``model-theoretic language" (\cite{fe}) is more
accurate, depending on one's views of what a ``logic" is.} which let
Lindstr\"om establish, roughly, that there were no extensions of
classical first order logic that would also satisfy the
compactness and L\"owenheim-Skolem theorems. A nice accessible
exposition can be found in \cite{flum}.

 Lindstr\"om's theorem single-handedly started a new area of research known as \emph{abstract} or \emph{soft} model theory (cf. \cite{bar1, barfer}). These two adjectives are used since, when working in this field,  one finds themself using ``only very general properties of the logic, properties that carry over to a large number of other logics" (\cite{bar1}, p. 225). Some common examples of such properties are compactness, the Craig interpolation theorem or the Beth definability theorem.

Naturally, there is no reason why one could not do non-classical
abstract model theory as well, this time focusing on some
non-classical logic and its structures. An interesting survey of
this much less developed field can be found in \cite{mm}. The most
prominent work has been done in the case of modal logic. A
succession of increasingly improved results were obtained by a
number of authors such as de Rijke (\cite{rijke}), van Benthem
(\cite{vanB}), Otto and Piro (\cite{otto}) and, more recently,
Enqvist (\cite{enqvist}). Indeed,  Enqvist's result is a bit more
general than the others and it serves as inspiration for this
paper. It roughly states that the basic modal language is the most
expressive logic (over any class of relational models
axiomatizable in first order logic by a collection of strict Horn
formulas) satisfying the so called Tarski union property,
invariance under bisimilarity and compactness.

The goal of this paper is to obtain a theorem in the vein of
Enqvist but for intuitionistic models, an intuitionistic language,
replacing invariance under bisimulation for preservation under
\emph{asimulations} and compactness by a topological property more suited to a
boolean negationless context.  The notion of an asimulation was
introduced  in \cite{o}\footnote{It was an
independent rediscovery in somewhat different clothing of the
notion of an intuitionistic directed bisimulations from
\cite{kurto}.} and used to characterize the expressive power of
intuitionistic languages as fragments of first order logic over
any class of structures axiomatizable by a first order theory.
Later the notion of asimulation was modified to capture also
expressive powers of intuitionistic first-order logic \cite{o1},
various systems of basic intuitionistic modal logic \cite{o2}, and
also some systems which are not connected with intuitionistic
logic at all \cite{o3}.

Our work will be of interest not only because it deals with
probably the second most famous non-classical logic but because it
will be a contribution to abstract model theory without Boolean
negation, a subfield where not many results are known.

The layout of the remaining part of the present paper is then as
follows. In Section \ref{S:Prel} we establish the main bits of our
notation and define some basic concepts. Section \ref{S:Unravel}
is then devoted to proving some less immediate lemmas on
intuitionistic unravellings and intuitionistically saturated
models. In Section \ref{S:Abstract} we define the notion of an
abstract intuitionistic logic and formulate our main result. We prove such result in the next section. In Section \ref{S:Top} we provide 
topological interpretation of the main theorem. We end the paper  by
 suggesting some lines of further inquiry.

\section{Preliminaries}\label{S:Prel}

In this paper, we consider the language of intuitionistic
propositional logic, which we identify with its set of formulas.
This language is generated from some set of propositional letters
by a finite number of applications of connectives from the set $\{
\bot, \top, \wedge, \vee, \to \}$; the connectives are assumed to
have they usual arities. The set of propositional letters can be
in general arbitrary large, but we assume that it is disjoint from
the above set of connectives (and from the set of logical symbols
of every logic which we are going to consider below). Any set with
this property we will call \emph{vocabulary}. Intuitionistic
propositional formulas will be denoted with Greek letters like
$\varphi$, $\psi$ and $\theta$,\footnote{We will be adjoining to
them subscripts and superscripts when needed, and the same is
assumed for any other notations introduced in this paper.} whereas
the elements of vocabularies will be denoted by letters $p$, $q$,
and $r$. If $\Theta$ is a vocabulary, then $IL(\Theta)$ denotes
the set of those intuitionistic propositional formulas which only
contain propositional letters from $\Theta$.

For this language, we assume the standard Kripke semantics.
The typical notations for intuitionistic Kripke models that we are
going to use below are as follows:
$$
\mathcal{M} = \langle W, R, V\rangle, \mathcal{M}' = \langle W',
R', V'\rangle, \mathcal{M}_n = \langle W_n, R_n, V_n\rangle,
$$
$$
\mathcal{N} = \langle U, S, Y\rangle, \mathcal{N}' = \langle U',
S', Y'\rangle,\mathcal{N}_n = \langle U_n, S_n, Y_n\rangle,
$$
where $n \in \omega$. As we proceed, we will also define some
operations on intuitionistic Kripke models which are going to
affect the notation.

If $\Theta$ is a vocabulary and $\mathcal{M}$ is an intuitionistic
Kripke $\Theta$-model, then $W$ is a non-empty set of
\emph{worlds}, \emph{states}, or \emph{nodes}, $R$ is a pre-order
on $W$ called $\mathcal{M}$'s \emph{accessibility relation}, and
$V$ is the evaluation function for $\Theta$ in $\mathcal{M}$, that
is to say, a function $V:\Theta \to 2^W$ such that for
every $p \in \Theta$ and arbitrary $s, t \in W$, it is true that:
$$
w\mathrel{R}v \Rightarrow (w \in V(p) \Rightarrow v \in V(p)).
$$

If $\mathcal{M}$ and $\mathcal{N}$ are two intuitionistic Kripke
$\Theta$-models then we say that $\mathcal{M}$ is a
\emph{submodel} of $\mathcal{N}$ and write $\mathcal{M} \subseteq
\mathcal{N}$ iff $W \subseteq U$, $R = S\upharpoonright(W\times
W)$ and, for every $p \in \Theta$, $V(p) = Y(p) \cap W$. In
general, for every $W \subseteq U$ there exists a corresponding
submodel  $\mathcal{M}$ of $\mathcal{N}$ with $W$ as its universe.
In such cases we may also denote $\mathcal{M}$ by
$\mathcal{N}(W)$.

If $\mathcal{M}_1 \subseteq,\ldots, \subseteq \mathcal{M}_n
\subseteq,\ldots$ is a countable chain of intuitionistic Kripke
models then the model:
$$
\bigcup_{n \in \omega}\mathcal{M}_n = (\bigcup_{n \in \omega}W_n,
\bigcup_{n \in \omega}R_n, \bigcup_{n \in \omega}V_n)
$$
is again an intuitionistic Kripke model.

A \emph{pointed} intuitionistic Kripke $\Theta$-model is a pair of
the form $(\mathcal{M}, w)$ such that $w \in W$. In this paper, we
are not going to consider any non-intuitionistic Kripke models.
Therefore, we will omit the qualification `intuitionistic Kripke'
in what follows and will simply speak about (pointed)
$\Theta$-models. We assume the standard satisfaction relation for
$IL(\Theta)$:
\begin{align*}
&\mathcal{M}, w \models_{IL} p_n \Leftrightarrow w \in V(p_n),
&&\textup{for }n \in \mathbb{N};\\
&\mathcal{M}, w \models_{IL} \varphi \wedge \psi \Leftrightarrow
\mathcal{M}, w \models_{IL} \varphi\textup{ and } \mathcal{M}, w
\models_{IL} \psi;\\
&\mathcal{M}, w \models_{IL} \varphi \vee \psi \Leftrightarrow
\mathcal{M}, w \models_{IL} \varphi\textup{ or } \mathcal{M}, w
\models_{IL} \psi;\\
&\mathcal{M}, w \models_{IL} \varphi \to \psi \Leftrightarrow
\forall v(w\mathrel{R}v \Rightarrow (\mathcal{M}, v
\not\models_{IL} \varphi\textup{ or } \mathcal{M}, v
\models_{IL} \psi));\\
&\mathcal{M}, w \not\models_{IL} \bot;\\
&\mathcal{M}, w \models_{IL} \top.
\end{align*}
note that the formulas of $IL(\Theta)$ get satisfied at pointed
$\Theta$-models rather than at models alone.

Since classical negation is not available in intuitionistic logic,
intuitionistic theories are often defined to include falsehood
assumptions along with truth assumptions (this is done in e.g.
\cite[p. 110]{GabbayMaksimova} w.r.t. intuitionistic first-order
logic). Thus an $IL(\Theta)$-theory becomes a pair $(\Gamma,
\Delta) \in 2^{IL(\Theta)}\times 2^{IL(\Theta)}$, where formulas
in $\Gamma$ are assumed to be true and formulas from $\Delta$ are
assumed to be false. If $(\mathcal{M}, w)$ is a pointed
$\Theta$-model, then we define $Th_{IL}(\mathcal{M}, w)$, the
\emph{$IL(\Theta)$-theory of} $(\mathcal{M}, w)$, as follows:
$$
Th_{IL}(\mathcal{M}, w) := (\{ \varphi \in IL(\Theta)\mid
\mathcal{M}, w \models_{IL} \varphi \}, \{ \varphi \in
IL(\Theta)\mid \mathcal{M}, w \not\models_{IL} \varphi \}).
$$
We also introduce a special notation for the left and right
projection of $Th_{IL}(\mathcal{M}, w)$, that is to say, for the
\emph{positive} and for the \emph{negative} part of this theory,
denoting them by $Th^+_{IL}(\mathcal{M}, w)$ and
$Th^-_{IL}(\mathcal{M}, w)$, respectively. Inclusion of
intuitionistic theories must then involve set-theoretic inclusion
of their respective projections, so that we define:
$$
(\Gamma, \Delta) \subseteq (\Gamma', \Delta') \Leftrightarrow
\Gamma \subseteq \Gamma'\textup{ and }\Delta \subseteq \Delta'.
$$
It is clear then that an $IL(\Theta)$-theory $(\Gamma, \Delta)$ is
\emph{$IL$-satisfiable} iff we have $(\Gamma, \Delta) \subseteq
Th_{IL}(\mathcal{M}, w)$ for some pointed $\Theta$-model
$(\mathcal{M}, w)$. In this case we will also write $\mathcal{M},
w \models_{IL} (\Gamma, \Delta)$. If $\mathcal{M} \subseteq
\mathcal{N}$ and for every $w \in W$ it is true that
$Th_{IL}(\mathcal{M}, w) = Th_{IL}(\mathcal{N}, w)$, then we say
that $\mathcal{M}$ is an $IL$\emph{-elementary submodel} of
$\mathcal{N}$ and write $\mathcal{M} \preccurlyeq_{IL}
\mathcal{N}$.

We end this section with some definitions and brief discussion of
asimulations and some other related notions relevant to the
subject of this paper.
\begin{definition}\label{D:asimulation}
{\em Let $(\mathcal{M}_1, w_1)$, $(\mathcal{M}_2, w_2)$ be pointed
$\Theta$-models. A binary relation $A$ is called an
\emph{asimulation from $(\mathcal{M}_1,w_1)$ to
$(\mathcal{M}_2,w_2)$} iff for any $i,j$ such that $\{ i,j \} = \{
1, 2 \}$, any $v \in W_i$, $s,t \in W_j$, any propositional letter
$p \in \Theta$ the following conditions hold:
\begin{align}
&A \subseteq (W_1 \times W_2) \cup (W_2\times
W_1)\label{E:c22}\tag{\text{s-type}}\\
&w_1\mathrel{A}w_2\label{E:c11}\tag{\text{elem}}\\
&(v\mathrel{A}s \wedge  v \in V_i(p)) \Rightarrow s \in V_j(p)))\label{E:c33}\tag{\text{s-atom}}\\
&(v\mathrel{A}s \wedge s\mathrel{R_j}t) \Rightarrow \exists u \in
W_i(v\mathrel{R_i}u \wedge t\mathrel{A}u \wedge
u\mathrel{A}t)\label{E:c44}\tag{\text{s-back}}
\end{align}
}
\end{definition}

Intuitionistic propositional formulas are known to be preserved
under asimulations. More precisely, if $(\mathcal{M}_1, w_1)$, and
$(\mathcal{M}_2, w_2)$ are pointed $\Theta$-models and $A$ is an
asimulation from $(\mathcal{M}_1,w_1)$ to $(\mathcal{M}_2,w_2)$,
then $Th^+_{IL}(\mathcal{M}_1, w_1) \subseteq
Th^+_{IL}(\mathcal{M}_2, w_2)$. Moreover, preservation under
asimulations is known to semantically characterize $IL$ as a
fragment of classical first-order logic, see \cite{o} for the
proof.

Asimulations are defined as an intuitionistic version of
bisimulations, a well-known  concept from classical modal logic.
In fact, bisimulations can be even defined as symmetric
asimulations. It follows then that if $B$ is a bisimulation
between $(\mathcal{M}_1,w_1)$ and $(\mathcal{M}_2,w_2)$ then $B$
is an asimulation both from $(\mathcal{M}_1,w_1)$ to
$(\mathcal{M}_2,w_2)$ and from $(\mathcal{M}_2,w_2)$ to
$(\mathcal{M}_1,w_1)$. Further, given a pair of pointed
$\Theta$-models $(\mathcal{M}_1, w_1)$ and $(\mathcal{M}_2, w_2)$
and a bisimulation $B$ between them, we get that
$Th_{IL}(\mathcal{M}_1, w_1) = Th_{IL}(\mathcal{M}_2, w_2)$.

Asimulation is not the only concept from which bisimulation arises
as its symmetric version; another (and more traditional) notion of
this kind would be simulation which is defined as follows:
\begin{definition}\label{D:simulation}
{\em Let $(\mathcal{M}_1, w_1)$, $(\mathcal{M}_2, w_2)$ be pointed
$\Theta$-models. A binary relation $Z$ is called an
\emph{simulation from $(\mathcal{M}_1,w_1)$ to
$(\mathcal{M}_2,w_2)$} iff for any $v, u \in W_1$, $s \in W_2$,
any propositional letter $p \in \Theta$ the condition
\eqref{E:c11} holds together with the following conditions:
\begin{align}
&Z \subseteq (W_1 \times W_2)\label{E:c2}\tag{\text{type}}\\
&(v\mathrel{Z}s \wedge  v \in V_1(p)) \Rightarrow s \in V_2(p))\label{E:c3}\tag{\text{atom}}\\
&(v\mathrel{Z}s \wedge v\mathrel{R_1}u) \Rightarrow \exists t \in
W_2(s\mathrel{R_2}t \wedge u\mathrel{Z}t
)\label{E:c4}\tag{\text{forth}}
\end{align}
}
\end{definition}
Note that in the definition of asimulation one finds
\emph{stronger} versions of clauses \eqref{E:c2} and \eqref{E:c3},
hence the labels \eqref{E:c22} and \eqref{E:c33}. The label
\eqref{E:c44} also indicates a strengthening of a more traditional
back-clause (given that the symmetry of $A$ is not guaranteed).

Simulations are clearly related to homomorphisms, which we define,
a la \cite{BdRV}, as follows:
\begin{definition}\label{D:homomorphism}
{\em Let $\mathcal{M}_1$, $\mathcal{M}_2$ be $\Theta$-models. A
function $h:W_1 \to W_2$ is called a \emph{homomorphism from
$\mathcal{M}_1$ to $\mathcal{M}_2$} iff for any $v, u \in W_1$,
any propositional letter $p \in \Theta$ the following conditions
hold:
\begin{align}
&v \in V_1(p) \Rightarrow h(v) \in V_2(p)\label{E:c3h}\tag{\text{atom}}\\
&v\mathrel{R_1}u \Rightarrow h(v)\mathrel{R_2}h(u)
\label{E:c4h}\tag{\text{hom}}
\end{align}
}
\end{definition}
It is clear that the main difference between simulations and
homomorphisms is that the latter are functions while the former
are relations. We will see in the next section that a sort of
connection between the two can be established and this fact is
important for our result.

A stronger version of asimulation in the context of intuitionistic
logic would be an $IL$-embedding defined as follows:
\begin{definition}\label{D:embedding}
{\em Let $\mathcal{M}_1$, $\mathcal{M}_2$ be $\Theta$-models. A
function $h:W_1 \to W_2$ is called an \emph{ $IL$-embedding of
$\mathcal{M}_1$ into $\mathcal{M}_2$} iff $h$ is an injective
homomorphism from $\mathcal{M}_1$ to $\mathcal{M}_2$, and for all
$v,u \in W_1$ it is true that
\begin{align}
&Th_{IL}(\mathcal{M}_1, v) = Th_{IL}(\mathcal{M}_2, h(v))\label{E:c3e}\tag{\text{theories}}\\
&v\mathrel{R_1}u \Leftrightarrow h(v)\mathrel{R_2}h(u)
\label{E:c4e}\tag{\text{s-hom}}
\end{align}
 }
\end{definition}
It is easy to notice that whenever $f$ is an $IL$-embedding of
$\mathcal{M}_1$ into $\mathcal{M}_2$, we have that
$\mathcal{M}_2(f(W)) \preccurlyeq_{IL} \mathcal{M}_2$. Also
$\mathcal{M}_1$ is obviously isomorphic to $\mathcal{M}_2(f(W))$,
so that replacing $\mathcal{M}_2(f(W))$ with a copy of
$\mathcal{M}_1$ changes nothing. Therefore, whenever there exists
is an $IL$-embedding of $\mathcal{M}_1$ into $\mathcal{M}_2$ we
may assume that $\mathcal{M}_1 \preccurlyeq_{IL} \mathcal{M}_2$

\section{Intuitionistic unravellings and saturated models}\label{S:Unravel}

In this section, we treat some less immediate properties of
intuitionistic logic. We start by introducing a further piece of
notation. If $(o_1,\ldots, o_n)$ is an $n$-tuple of objects of any
nature, then we will denote it with $\bar{o}_n$.

Unravelling Kripke models is another well-known item from
classical modal logic. Here we adapt it to the intuitionistic
setting. For a given intuitionistic pointed Kripke $\Theta$-model
$(\mathcal{M}, w)$, the model $\mathcal{M}^{un}_w = \langle
W^{un}_w, R^{un}_w, V^{un}_w\rangle$, called the intuitionistic
unravelling of $\mathcal{M}$ around $w$ is defined as follows.
\begin{itemize}
\item $W^{un}_w = \{ \bar{u}_n \in W^n\mid u_1 = w,\,(\forall i <
n)(w_i\mathrel{R}w_{i + 1}) \}$;

\item $R^{un}_w$ is the reflexive and transitive closure of the
following relation: $\{ (s,t) \in W^2 \mid (\exists u \in W)(t =
(s, u)) \}$;

\item For arbitrary $p \in \Theta$, we have $V^{un}_w(p) = \{
\bar{u}_n \in W^{un}_w \mid u_n \in V(p) \}$.
\end{itemize}

The following lemma sums up the basic facts about intuitionistic
unravellings:
\begin{lemma}\label{unravellinglemma}
Let $(\mathcal{M}, w)$ be a pointed $\Theta$-model. Then:
\begin{enumerate}
\item $\mathcal{M}^{un}_w$ is a $\Theta$-model;

\item $R^{un}_w$ is antisymmetric;

\item $(\mathcal{M}^{un}_w, w)$ is bisimilar to $(\mathcal{M},
w)$;

\item $Th_{IL}(\mathcal{M}, v_k) = Th_{IL}(\mathcal{M},
\bar{v}_k)$ for any $\bar{v}_k \in W^{un}_w$;

\item $Th_{IL}(\mathcal{M}, \bar{u}_n) = Th_{IL}(\mathcal{M},
\bar{v}_k)$ for any $\bar{u}_n, \bar{v}_k \in W^{un}_w$ whenever
$u_n = v_k$.
\end{enumerate}
\end{lemma}
\begin{proof} By definition, $R^{un}_w$ is reflexive and transitive. Moreover,
we note that for arbitrary $\bar{w}_k, \bar{v}_n \in W^{un}_w$ we
have
$$
\bar{w}_k\mathrel{R^{un}_w}\bar{v}_n \Leftrightarrow k \leq n
\wedge \bar{w}_k = \bar{v}_k.
$$
Therefore, to show antisymmetry, assume that for a given
$\bar{w}_k, \bar{v}_n \in W^{un}_w$ we have

$$
\bar{w}_k\mathrel{R^{un}_w}\bar{v}_n \wedge
\bar{v}_n\mathrel{R^{un}_w}\bar{w}_k.
$$
By the above biconditional it immediately follows that $k \leq n
\wedge n \leq k$ so that we have $k = n$ and, further, that
$\bar{w}_k = \bar{v}_k$. Therefore, we get $\bar{w}_k = \bar{v}_n$
and thus $R^{un}_w$ is shown to be antisymmetric.

To show monotonicity of $V^{un}_w$ w.r.t. $R^{un}_w$, assume that
for some $\bar{w}_k, \bar{v}_n \in W^{un}_w$ we have
$\bar{w}_k\mathrel{R^{un}_w}\bar{v}_n$. Then we have both $k \leq
n$ and $\bar{w}_k = \bar{v}_k$, so that, in particular, we get
$w_k = v_k$. Further, by definition of $W^{un}_w$ we have that
$$
v_k\mathrel{R}v_{k + 1}\mathrel{R},\ldots,\mathrel{R}v_{n -
1}\mathrel{R}v_n,
$$
whence, by monotonicity of $V$ w.r.t. $R$, it follows that for
every $p \in \Theta$, if $v_k \in V(p)$, then $v_n \in V(p)$. It
remains to notice that, by definition of $V^{un}_w$ we have both
$$
\bar{w}_k \in V^{un}_w(p) \Leftrightarrow w_k \in V(p)
\Leftrightarrow v_k \in V(p),
$$
(by $w_k = v_k$) and
$$
\bar{v}_n \in V^{un}_w(p) \Leftrightarrow v_n \in V(p),
$$
whence we obtain that $\bar{v}_n \in V^{un}_w(p)$, whenever
$\bar{w}_k \in V^{un}_w(p)$.

As for the bisimulation $B$ between $(\mathcal{M}^{un}_w, w)$ and
$(\mathcal{M}, w)$, it can be defined as follows:
$$
B := \{(w_n, \bar{w}_n), (\bar{w}_n,w_n) \mid (\exists k \geq
0)(w\mathrel{R^{un}_w} w_n)\}.
$$
Parts $4$ and $5$ then follow from this definition of bisimulation
and preservation of intuitionistic formulas under asimulations.
\end{proof}
We state here one more lemma on intuitionistic unravellings
relevant to our main result:
\begin{lemma}\label{L:homomorphism}
Let $(\mathcal{M}, w)$ and $(\mathcal{N}, u)$ be  pointed
$\Theta$-models, and let $Z$ be a simulation from
$(\mathcal{M}^{un}_w, w)$ to $(\mathcal{N}, u)$ such that the left
projection of $Z$ is $W^{un}_w$ (that is to say, $Z$ is total).
Then there exists a homomorphism $h: \mathcal{M}^{un}_w \to
\mathcal{N}$ such that $h(w) = u$ and $v\mathrel{Z}h(v)$ for every
$v \in W^{un}_w$.
\end{lemma}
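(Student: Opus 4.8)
The plan is to build the homomorphism $h$ by a tree recursion over $W^{un}_w$, exploiting the fact that the unravelled domain is a tree under $R^{un}_w$ and that $Z$ is total. I would define $h$ by recursion on the length of tuples in $W^{un}_w$. The base case is forced: set $h(w) = u$, which is legitimate because condition \eqref{E:c11} gives $w\mathrel{Z}u$, so the requirement $v\mathrel{Z}h(v)$ holds at the root. For the recursion step, suppose $h$ has been defined on all tuples of length $\leq n$ so that $v\mathrel{Z}h(v)$ for each such $v$. Given a tuple $\bar{v}_{n+1}\in W^{un}_w$, its unique $R^{un}_w$-predecessor is $\bar{v}_n$, and by construction we already have $\bar{v}_n\mathrel{Z}h(\bar{v}_n)$ together with $\bar{v}_n\mathrel{R^{un}_w}\bar{v}_{n+1}$. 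Applying the \eqref{E:c4} (forth) clause of the simulation $Z$ yields some $t\in U$ with $h(\bar{v}_n)\mathrel{S}t$ and $\bar{v}_{n+1}\mathrel{Z}t$; I set $h(\bar{v}_{n+1}):=t$ (choosing one such $t$). This immediately secures both $v\mathrel{Z}h(v)$ at the new node and the clause \eqref{E:c4h} (hom) of Definition \ref{D:homomorphism} for the generating edges of $R^{un}_w$.

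**Next I would verify that $h$ is genuinely a homomorphism.** The atomic condition \eqref{E:c3h} is immediate from \eqref{E:c3}: for any $v\in W^{un}_w$ we have $v\mathrel{Z}h(v)$, so $v\in V^{un}_w(p)$ implies $h(v)\in Y(p)$ by the (atom) clause of the simulation. For the relational condition \eqref{E:c4h}, recall the characterization established in the proof of Lemma \ref{unravellinglemma}, namely that $\bar{w}_k\mathrel{R^{un}_w}\bar{v}_n$ holds exactly when $k\leq n$ and $\bar{w}_k=\bar{v}_k$. Thus an arbitrary $R^{un}_w$-edge decomposes into finitely many one-step generating edges along the branch from $\bar{w}_k$ to $\bar{v}_n$. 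Since $S$ (being the accessibility relation of the pre-order on $\mathcal{N}$) is transitive and reflexive, and $h$ respects each generating edge by the recursion step, composing these steps gives $h(\bar{w}_k)\mathrel{S}h(\bar{v}_n)$, which is \eqref{E:c4h}.

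**The one point that requires care, rather than a genuine obstacle, is the role of totality and the use of choice.** Totality of $Z$ (its left projection being all of $W^{un}_w$) is what guarantees that at each node the relevant pair $\bar{v}_n\mathrel{Z}h(\bar{v}_n)$ is available to feed into the forth clause; without it the recursion could stall. The forth clause only supplies \emph{some} witness $t$, so defining $h$ as a function requires selecting one witness at each node, i.e.\ an appeal to the axiom of choice (or, since the recursion proceeds level by level along a well-founded tree, a single application of dependent choice). I would note this explicitly but not belabor it, as it is standard. No delicate interplay arises because the tree structure of $W^{un}_w$ means each node has a unique predecessor, so the local choices never need to be reconciled globally — the function $h$ is well-defined node by node and the homomorphism conditions follow from the simulation clauses together with the transitivity of $S$.
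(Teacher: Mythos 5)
Your proposal is correct and follows essentially the same route as the paper: define $h$ by recursion on tuple length, anchoring $h(w)=u$ via \eqref{E:c11} and extending along the unique immediate predecessor using the forth clause \eqref{E:c4}, with a choice of witness at each node. You are in fact somewhat more explicit than the paper in verifying the homomorphism conditions (decomposing an arbitrary $R^{un}_w$-edge into generating steps and invoking transitivity of $S$), which is a welcome addition rather than a divergence.
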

\begin{proof}
Suppose that there is total simulation $Z$ from
$(\mathcal{M}^{un}_w, w)$ to $(\mathcal{N}, u)$. We then define
$h$ by induction on the length of $\bar{v}_n \in W^{un}_w$. The
base case is when $\bar{v}_n = w$ and we set $h(w) := u$. If $n =
k + 1$, then $h(\bar{v}_k)$ must be already defined by induction
hypothesis, and we must also have
$\bar{v}_k\mathrel{Z}h(\bar{v}_k)$. Also, we have
$\bar{v}_k\mathrel{R^{un}_w}\bar{v}_{k + 1}$. But then by
condition \eqref{E:c4}, there must be some $s \in U$ such that
$h(\bar{v}_k)\mathrel{Z}s$ and $\bar{v}_{k + 1}\mathrel{Z}s$.
Choose one such $s$ and set $h(\bar{v}_{k + 1}) := s$. Since
$\bar{v}_k$, the immediate predecessor of $\bar{v}_{k + 1}$ different from $\bar{v}_{k + 1}$  in
$\mathcal{M}^{un}_w$ is unique (naturally, there might be other predecessors of $\bar{v}_{k + 1}$ but they have to lay below $\bar{v}_{k }$), $h$ is a function with the
required properties.
\end{proof}

Let $(\Gamma, \Delta)$ be an $IL(\Theta)$-theory and
$\mathcal{M}$ be  a $\Theta$-model. If $v \in W$, we will say that
$(\Gamma, \Delta)$ \emph{is finitely $IL$-satisfiable in
$\mathcal{M}$ by successors of $v$} iff for all finite $\Gamma'
\subseteq \Gamma$ and $\Delta' \subseteq \Delta$, there is a $u
\in W$ such that $v\mathrel{R}u$ and $\mathcal{M},u
\models_{IL}(\Gamma', \Delta')$. A $\Theta$-model $\mathcal{M}$ is
called \emph{$IL$-saturated} iff for every $v \in W$ and for every
$IL(\Theta)$-theory $(\Gamma, \Delta)$, whenever $(\Gamma,
\Delta)$ is finitely $IL$-satisfiable in $\mathcal{M}$ by
successors of $v$, then for some $u \in W$ it is true that
$v\mathrel{R}u$ and $\mathcal{M},u \models_{IL}(\Gamma, \Delta)$.
The importance of intuitionistically saturated models is that
among them asimulations can be defined in the following easy and
natural way:

\begin{lemma}\label{L:asimulations}
Let $(\mathcal{M}_1, w_1)$, $(\mathcal{M}_2, w_2)$ be pointed
$\Theta$-models. If $Th^+_{IL}(\mathcal{M}_1, w_1) \subseteq
Th^+_{IL}(\mathcal{M}_2, w_2)$ and both $\mathcal{M}_1$ and
$\mathcal{M}_2$ are intuitionistically saturated, then the
relation $A$ such that for all $u \in W_i$, $s \in W_j$ if $\{ i,j
\} = \{ 1,2 \}$, then
$$
u\mathrel{A}s \Leftrightarrow (Th^+_{IL}(\mathcal{M}_i, u)
\subseteq Th^+_{IL}(\mathcal{M}_j, s))
$$
is an asimulation from $(\mathcal{M}_1, w_1)$ to $(\mathcal{M}_2,
w_2)$.
\end{lemma}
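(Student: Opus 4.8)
The plan is to verify the four defining clauses of an asimulation for the relation $A$ given by $u\mathrel{A}s \Leftrightarrow Th^+_{IL}(\mathcal{M}_i, u)\subseteq Th^+_{IL}(\mathcal{M}_j, s)$. Clauses \eqref{E:c22} and \eqref{E:c33} are immediate: the s-type condition holds by the very way $A$ is stipulated (it only relates worlds across the two models), and s-atom follows because if $u\mathrel{A}s$ and $u\in V_i(p)$, then the atom $p$ lies in $Th^+_{IL}(\mathcal{M}_i,u)$, hence in $Th^+_{IL}(\mathcal{M}_j,s)$, so $s\in V_j(p)$. Clause \eqref{E:c11} holds by the hypothesis $Th^+_{IL}(\mathcal{M}_1,w_1)\subseteq Th^+_{IL}(\mathcal{M}_2,w_2)$, which is exactly $w_1\mathrel{A}w_2$.

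The substantive work is clause \eqref{E:c44}, the s-back condition. So suppose $\{i,j\}=\{1,2\}$, that $v\in W_i$, $s,t\in W_j$, that $v\mathrel{A}s$ (i.e. $Th^+_{IL}(\mathcal{M}_i,v)\subseteq Th^+_{IL}(\mathcal{M}_j,s)$), and that $s\mathrel{R_j}t$. I must produce a witness $u\in W_i$ with $v\mathrel{R_i}u$, $t\mathrel{A}u$, and $u\mathrel{A}t$, i.e.\ with $v\mathrel{R_i}u$ and $Th^+_{IL}(\mathcal{M}_j,t)\subseteq Th^+_{IL}(\mathcal{M}_i,u)$ together with $Th^+_{IL}(\mathcal{M}_i,u)\subseteq Th^+_{IL}(\mathcal{M}_j,t)$ — that is, $Th^+_{IL}(\mathcal{M}_i,u)=Th^+_{IL}(\mathcal{M}_j,t)$. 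The idea is to find such a $u$ as an $R_i$-successor of $v$ realizing the full $IL$-theory of $t$, and for this I will invoke the saturation of $\mathcal{M}_i$. Concretely, I consider the $IL(\Theta)$-theory $(\Gamma,\Delta):=(Th^+_{IL}(\mathcal{M}_j,t),\,Th^-_{IL}(\mathcal{M}_j,t))=Th_{IL}(\mathcal{M}_j,t)$ and show it is finitely $IL$-satisfiable in $\mathcal{M}_i$ by successors of $v$; saturation then yields the desired $u$ with $v\mathrel{R_i}u$ and $Th_{IL}(\mathcal{M}_i,u)=Th_{IL}(\mathcal{M}_j,t)$, which in particular gives equality of the positive parts and hence both $t\mathrel{A}u$ and $u\mathrel{A}t$.

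The main obstacle, and the crux of the argument, is establishing finite satisfiability by successors of $v$. Fix finite $\Gamma'=\{\varphi_1,\dots,\varphi_m\}\subseteq Th^+_{IL}(\mathcal{M}_j,t)$ and finite $\Delta'=\{\psi_1,\dots,\psi_n\}\subseteq Th^-_{IL}(\mathcal{M}_j,t)$; I need a single $R_i$-successor of $v$ satisfying all the $\varphi_k$ and none of the $\psi_l$. Let $\varphi:=\bigwedge_k\varphi_k$ and note $\mathcal{M}_j,t\models_{IL}\varphi$ while $\mathcal{M}_j,t\not\models_{IL}\psi_l$ for each $l$. The trick is to encode the failure of the $\psi_l$ at an $R_j$-successor into an implication witnessed at $s$. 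For each $l$, since $\mathcal{M}_j,t\not\models_{IL}\psi_l$ and $s\mathrel{R_j}t$, the formula $\varphi\to\psi_l$ fails at $s$ — indeed $t$ is an $R_j$-successor of $s$ where $\varphi$ holds but $\psi_l$ does not. Hence $\varphi\to\psi_l\notin Th^+_{IL}(\mathcal{M}_j,s)$, and by the contrapositive of $v\mathrel{A}s$ we get $\varphi\to\psi_l\notin Th^+_{IL}(\mathcal{M}_i,v)$, i.e.\ $\mathcal{M}_i,v\not\models_{IL}\varphi\to\psi_l$. Unwinding the clause for $\to$, this means there is an $R_i$-successor $u_l$ of $v$ with $\mathcal{M}_i,u_l\models_{IL}\varphi$ and $\mathcal{M}_i,u_l\not\models_{IL}\psi_l$.

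To finish finite satisfiability I must combine these witnesses $u_1,\dots,u_n$ into a single successor of $v$ that refutes \emph{all} the $\psi_l$ at once while still satisfying $\varphi$. This I handle by replacing $\Delta'$ with a single disjunction: set $\psi:=\bigvee_l\psi_l$. Since each $\psi_l$ fails at $t$, the disjunction $\psi$ fails at $t$ as well, so $\varphi\to\psi$ fails at $s$ by the same argument, hence at $v$; this yields one successor $u$ of $v$ with $\mathcal{M}_i,u\models_{IL}\varphi$ and $\mathcal{M}_i,u\not\models_{IL}\psi$, and from $\mathcal{M}_i,u\not\models_{IL}\bigvee_l\psi_l$ it follows that $\mathcal{M}_i,u\not\models_{IL}\psi_l$ for every $l$. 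Thus $u$ witnesses finite satisfiability of $(\Gamma',\Delta')$ by a successor of $v$. (When $\Delta'=\emptyset$ one uses $\bot$ for $\psi$, or argues directly from $\varphi\in Th^+_{IL}(\mathcal{M}_j,s)\subseteq Th^+_{IL}$ inherited along $A$; the edge cases are routine.) Having shown finite satisfiability for arbitrary finite $\Gamma',\Delta'$, $IL$-saturation of $\mathcal{M}_i$ delivers the required $u$, completing the verification of \eqref{E:c44} and hence the proof that $A$ is an asimulation.
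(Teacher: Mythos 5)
Your proposal is correct and follows essentially the same route as the paper: clauses \eqref{E:c22}, \eqref{E:c11}, \eqref{E:c33} are immediate, and \eqref{E:c44} is verified by taking a finite subtheory $(\Gamma',\Delta')$ of $Th_{IL}(\mathcal{M}_j,t)$, observing that $\bigwedge\Gamma'\to\bigvee\Delta'$ fails at $s$ and hence (by the contrapositive of the theory inclusion) at $v$, which yields an $R_i$-successor satisfying $(\Gamma',\Delta')$, and then invoking $IL$-saturation to realize the whole of $Th_{IL}(\mathcal{M}_j,t)$ at a single successor. Your intermediate step with the individual $\psi_l$ and their combination into $\bigvee_l\psi_l$ is exactly the paper's single formula $\bigwedge\Gamma'\to\bigvee\Delta'$, so there is no substantive difference.
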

\begin{proof}
The relation $A$, as defined in the lemma, obviously satisfies
conditions \eqref{E:c22}, \eqref{E:c11}, and \eqref{E:c33} given
in Definition \ref{D:asimulation}. We check the remaining
condition, \eqref{E:c44}.

Assume that $u\mathrel{A}s$, so that $Th^+_{IL}(\mathcal{M}_i, u)
\subseteq Th^+_{IL}(\mathcal{M}_j, s)$, and let for some $t \in
W_j$ we have $s\mathrel{R_j}t$. Let $(\Gamma', \Delta') \subseteq
Th_{IL}(\mathcal{M}_j, t)$ be finite. Then, of course, we have:
$$
\mathcal{M}_j, s \not\models_{IL} \bigwedge\Gamma' \to
\bigvee\Delta',
$$
and, by $u\mathrel{A}s$:
$$
\mathcal{M}_i, u \not\models_{IL} \bigwedge\Gamma' \to
\bigvee\Delta'.
$$
The latter means that $(\Gamma', \Delta') $ must be $IL$-satisfied
by some $R_i$-successor of $u$. Therefore, by intuitionistic
saturation of both $\mathcal{M}_1$ and $\mathcal{M}_2$, we get
that for some $v \in W_i$ such that $u\mathrel{R_i}w$ the theory
$Th_{IL}(\mathcal{M}_j, t)$ is $IL$-satisfied at $(\mathcal{M}_i,
u)$. It follows immediately that
$$
Th_{IL}(\mathcal{M}_i, w) = Th_{IL}(\mathcal{M}_j, t),
$$
whence
$$
Th^+_{IL}(\mathcal{M}_i, w) = Th^+_{IL}(\mathcal{M}_j, t),
$$
which, in turn, means that both $w\mathrel{A}t$ and
$t\mathrel{A}w$.
\end{proof}

\section{Abstract intuitionistic logics}\label{S:Abstract}
An abstract intuitionistic logic $\mathcal{L}$ is a pair $(L,
\models_\mathcal{L})$, where $L$ maps every vocabulary $\Theta$ to
the set $L(\Theta)$ of $\Theta$-formulas of $\mathcal{L}$ and
$\models_\mathcal{L}$ is a binary relation between pointed models
and elements of $L(\Theta)$ for some vocabulary $\Theta$ such that
the following conditions are satisfied:
\begin{itemize}
\item $\Theta \subseteq \Theta' \Rightarrow L(\Theta) \subseteq
L(\Theta')$.

\item If $\mathcal{M}$ is a $\Theta$-model and $\mathcal{M},w
\models_\mathcal{L} \phi$, then $\phi \in L(\Theta)$.

\item If $\mathcal{M}$ and $\mathcal{N}$ are $\Theta$-models,
$\phi \in L(\Theta)$ and $f$ is an isomorphism between
$\mathcal{M}$ and $\mathcal{N}$, then for every $w \in W$ it is
true that:
$$
\mathcal{M}, w \models_\mathcal{L} \phi \Leftrightarrow
\mathcal{N}, f(w) \models_\mathcal{L} \phi.
$$

\item (Expansion). If $\Theta$ is a vocabulary, $\phi \in
L(\Theta)$, $\Theta \subseteq \Theta'$, $\mathcal{M}$ is a
$\Theta'$-model, and $\mathcal{M}\upharpoonright\Theta$ is the
reduct of $\mathcal{M}$ to $\Theta$, then:
$$
\mathcal{M}, w \models_\mathcal{L} \phi \Leftrightarrow
\mathcal{M}\upharpoonright\Theta, w \models_\mathcal{L} \phi.
$$

\item (Occurrence). If $\phi \in L(\Theta)$ for some vocabulary ,
then there is a finite $\Theta_\phi \subseteq \Theta$ such that
for every $\Theta'$-model $\mathcal{M}$, the relation
$\mathcal{M}\models_\mathcal{L}\phi$ is defined iff $\Theta_\phi
\subseteq \Theta'$.

\item (Closure). For every vocabulary $\Theta$ and all $\phi, \psi
\in L(\Theta)$, we have $\phi \to \psi, \phi \wedge \psi, \phi
\vee \psi \in L(\Theta)$, that is to say, $\mathcal{L}$ is closed
under intuitionistic implication, conjunction and disjunction.
\end{itemize}

We further define that given a pair of abstract intuitionistic
logics $\mathcal{L}$ and $\mathcal{L}'$, we say that
$\mathcal{L}'$ extends $\mathcal{L}$ and write $\mathcal{L}
\trianglelefteq \mathcal{L}'$ when for all vocabularies $\Theta$
and $\phi\in L(\Theta)$ there exists a $\psi\in L'(\Theta)$ such
that for arbitrary pointed $\Theta$-model $(\mathcal{M}, w)$ it is
true that:
$$
\mathcal{M}, w\models_\mathcal{L}\phi \Leftrightarrow \mathcal{M},
w\models_{\mathcal{L}'}\psi.
$$
If both $\mathcal{L} \trianglelefteq \mathcal{L}'$ and
$\mathcal{L}' \trianglelefteq \mathcal{L}$ holds, then we say that
the logics $\mathcal{L}$ and $\mathcal{L}'$ are \emph{expressively
equivalent} and write $\mathcal{L} \equiv \mathcal{L}'$.

It is easy to see that intuitionistic propositional logic itself
turns out to be an abstract intuitionistic logic $\mathsf{IL} =
(IL, \models_{IL})$ under this definition. It is also obvious that
the above definitions and conventions about intuitionistic
theories can be carried over to an arbitrary abstract
intuitionistic logic $\mathcal{L}$ replacing everywhere $IL$ with
$\mathcal{L}$, including such notions as elementary submodel,
embedding, saturation of a model, etc. In particular, since the
relation $\models_\mathcal{L}$ never distinguishes between
isomorphic models, the remark after Definition \ref{D:embedding}
holds for arbitrary intuitionistic logics $\mathcal{L}$.

In this paper, our specific interest is in the extensions of
$\mathsf{IL}$. Since every abstract intuitionistic logic
$\mathcal{L}$ extending $\mathsf{IL}$ must have an equivalent for
every intuitionistic propositional formula, we will just assume
that for every vocabulary $\Theta$ we have $IL(\Theta) \subseteq
L(\Theta)$ and that for every $\varphi \in IL(\Theta)$ and every
pointed $\Theta$-model $(\mathcal{M}, w)$ we have that:
$$
\mathcal{M}, w\models_\mathcal{L}\varphi \Leftrightarrow
\mathcal{M}, w\models_{\mathsf{IL}}\varphi,
$$
so that all the intuitionistic
propositional formulas are present in $\mathcal{L}$ in their usual
form and with their usual meaning, and whatever other formulas
that $\mathcal{L}$ may contain are distinct from the elements of
$IL(\Theta)$.

We can immediately state the following corollary to Lemma
\ref{L:asimulations} for arbitrary extensions of $\mathsf{IL}$:

\begin{corollary}\label{L:asimulationscorollary}
Let $\mathsf{IL} \trianglelefteq \mathcal{L}$, and let
$(\mathcal{M}_1, w_1)$, $(\mathcal{M}_2, w_2)$ be two pointed
intuitionistic Kripke $\Theta$-models. If
$Th^+_{IL}(\mathcal{M}_1, w_1) \subseteq Th^+_{IL}(\mathcal{M}_2,
w_2)$ and both $\mathcal{M}_1$ and $\mathcal{M}_2$ are
$\mathcal{L}$-saturated, then the relation $A$ such that for all
$u \in W_i$, $s \in W_j$ if $\{ i,j \} = \{ 1,2 \}$, then
$$
u\mathrel{A}s \Leftrightarrow (Th^+_{IL}(\mathcal{M}_i, u)
\subseteq Th^+_{IL}(\mathcal{M}_j, s))
$$
is an asimulation from $(\mathcal{M}_1, w_1)$ to $(\mathcal{M}_2,
w_2)$.
\end{corollary}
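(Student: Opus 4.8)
The plan is to reduce this corollary to Lemma \ref{L:asimulations}, whose conclusion is literally identical except that it assumes $\mathcal{M}_1$ and $\mathcal{M}_2$ are $IL$-saturated rather than $\mathcal{L}$-saturated. So the entire content of the corollary is the observation that $\mathcal{L}$-saturation descends to $IL$-saturation whenever $\mathsf{IL} \trianglelefteq \mathcal{L}$; once that is in place, the lemma applies verbatim.

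First I would record what the extension hypothesis buys us. Since $\mathsf{IL} \trianglelefteq \mathcal{L}$, our standing convention gives $IL(\Theta) \subseteq L(\Theta)$ for every vocabulary $\Theta$, and moreover $\models_{\mathcal{L}}$ restricted to $IL$-formulas coincides with $\models_{IL}$. Two consequences follow immediately: every $IL(\Theta)$-theory $(\Gamma, \Delta)$ is in particular an $\mathcal{L}(\Theta)$-theory via this inclusion, and for such a theory the notions of (finite) $IL$-satisfiability and $\mathcal{L}$-satisfiability at a given pointed model agree, because they are decided by the satisfaction of the very same formulas.

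The key step is then to show that every $\mathcal{L}$-saturated model $\mathcal{M}$ is $IL$-saturated. I would take $v \in W$ and an $IL(\Theta)$-theory $(\Gamma, \Delta)$ that is finitely $IL$-satisfiable in $\mathcal{M}$ by successors of $v$. Regarding $(\Gamma, \Delta)$ as an $\mathcal{L}(\Theta)$-theory and using the agreement of the two satisfaction relations on $IL$-formulas, it is then finitely $\mathcal{L}$-satisfiable by successors of $v$; by $\mathcal{L}$-saturation there is a $u \in W$ with $v \mathrel{R} u$ at which $(\Gamma, \Delta)$ is $\mathcal{L}$-satisfied, and invoking the agreement once more this same $u$ witnesses $IL$-satisfiability of $(\Gamma, \Delta)$ by a successor of $v$. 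Hence $\mathcal{M}$ is $IL$-saturated.

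Finally, since $\mathcal{M}_1$ and $\mathcal{M}_2$ are assumed $\mathcal{L}$-saturated, the previous step makes both of them $IL$-saturated, and the hypothesis $Th^+_{IL}(\mathcal{M}_1, w_1) \subseteq Th^+_{IL}(\mathcal{M}_2, w_2)$ is precisely what Lemma \ref{L:asimulations} demands. Applying that lemma directly yields that the relation $A$ defined in the statement is an asimulation from $(\mathcal{M}_1, w_1)$ to $(\mathcal{M}_2, w_2)$. I do not expect any genuine obstacle here: the whole argument is the routine fact that saturation with respect to a more expressive logic implies saturation with respect to a less expressive one, and the only point demanding any care is checking that the extension clause really does force $\models_{\mathcal{L}}$ and $\models_{IL}$ to coincide on the shared fragment $IL(\Theta)$, which is guaranteed by our convention on extensions of $\mathsf{IL}$.
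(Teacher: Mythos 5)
Your proposal is correct and follows essentially the same route as the paper, which disposes of the corollary in one line by noting that every $\mathcal{L}$-saturated model is $IL$-saturated (since $IL(\Theta)\subseteq L(\Theta)$ and the two satisfaction relations agree on that fragment) and then reusing Lemma \ref{L:asimulations}. Your version merely spells out the saturation-transfer step explicitly, which is a reasonable elaboration rather than a different argument.
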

To prove this, we just repeat the proof of Lemma
\ref{L:asimulations} using the fact that every
$\mathcal{L}$-saturated model is of course $IL$-saturated.

Some of the extensions of $\mathsf{IL}$ turn out to be better than
others in that they have useful model-theoretic properties. We
define some of the relevant properties below.
\begin{definition}\label{D:properties}
Let $\mathcal{L} = (L, \models_\mathcal{L})$ be an abstract
intuitionistic logic. Then:
\begin{itemize}
\item $\mathcal{L}$ is \textbf{invariant under asimulations}, iff
for all vocabularies $\Theta$ and arbitrary pointed
$\Theta$-models $(\mathcal{M}_1, w_1)$ and $(\mathcal{M}_2, w_2)$,
whenever $A$ is an asimulation from $(\mathcal{M}_1, w_1)$ to
$(\mathcal{M}_2, w_2)$, then the inclusion
$Th^+_\mathcal{L}(\mathcal{M}_1, w_1) \subseteq
Th^+_\mathcal{L}(\mathcal{M}_2, w_2)$ holds.

\item $\mathcal{L}$ is \textbf{intuitionistically compact}, iff an arbitrary
$L(\Theta)$-theory  $(\Gamma, \Delta)$ is
$\mathcal{L}$-satisfiable, whenever for every finite $\Gamma'
\subseteq \Gamma$ and $\Delta' \subseteq \Delta$, the theory
$(\Gamma', \Delta')$ is $\mathcal{L}$-satisfiable.

\item $\mathcal{L}$ has \textbf{Tarski Union Property (TUP)} iff
for every $\mathcal{L}$-elementary chain

$$
\mathcal{M}_0 \preccurlyeq_\mathcal{L},\ldots,
\preccurlyeq_\mathcal{L} \mathcal{M}_n
\preccurlyeq_\mathcal{L},\ldots
$$
it is true that:
$$
\mathcal{M}_n \preccurlyeq_\mathcal{L} \bigcup_{n \in
\omega}\mathcal{M}_n
$$

for all $n \in \omega$.
\end{itemize}
\end{definition}
We have mentioned above that in the case of $\mathsf{IL}$
invariance under asimulations implies invariance under
bisimulations. The same argument holds in the case of arbitrary
abstract intuitionistic logic $\mathcal{L}$. In other words, the
following lemma holds:
\begin{lemma}\label{L:bisimulationinvariance}
If $\mathcal{L}$ is an abstract intuitionistic logic that is
invariant under asimulations, then $\mathcal{L}$ is invariant
under bisimulations. In other words, for arbitrary vocabulary
$\Theta$ and arbitrary pointed $\Theta$-models $(\mathcal{M}_1,
w_1)$ and $(\mathcal{M}_2, w_2)$, if $B$ is bisimulation between
$(\mathcal{M}_1, w_1)$ and $(\mathcal{M}_2, w_2)$, then:
$$
Th_\mathcal{L}(\mathcal{M}_1, w_1) = Th_\mathcal{L}(\mathcal{M}_2,
w_2).
$$
\end{lemma}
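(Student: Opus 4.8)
The plan is to exploit the fact that a bisimulation is nothing but a symmetric asimulation, and then to apply the hypothesis of invariance under asimulations in both directions. First I would recall, exactly as noted in the discussion following Definition \ref{D:asimulation}, that whenever $B$ is a bisimulation between $(\mathcal{M}_1, w_1)$ and $(\mathcal{M}_2, w_2)$, then $B$ is an asimulation \emph{both} from $(\mathcal{M}_1, w_1)$ to $(\mathcal{M}_2, w_2)$ \emph{and} from $(\mathcal{M}_2, w_2)$ to $(\mathcal{M}_1, w_1)$. This is a purely relational observation that does not involve $\mathcal{L}$ at all: the clauses \eqref{E:c22}, \eqref{E:c33} and \eqref{E:c44} are already phrased symmetrically in the indices $i, j$, while the two instances of \eqref{E:c11} that are needed, namely $w_1 \mathrel{B} w_2$ and $w_2 \mathrel{B} w_1$, both hold by the symmetry of $B$.

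Next, I would apply the assumed invariance of $\mathcal{L}$ under asimulations to each of these two asimulations in turn. The first application yields $Th^+_\mathcal{L}(\mathcal{M}_1, w_1) \subseteq Th^+_\mathcal{L}(\mathcal{M}_2, w_2)$, and the second yields $Th^+_\mathcal{L}(\mathcal{M}_2, w_2) \subseteq Th^+_\mathcal{L}(\mathcal{M}_1, w_1)$, so that the positive parts coincide:
$$
Th^+_\mathcal{L}(\mathcal{M}_1, w_1) = Th^+_\mathcal{L}(\mathcal{M}_2, w_2).
$$

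It remains to pass from the positive parts to the full theories. Here I would fix the common vocabulary $\Theta$ and observe that, by the Occurrence condition, every $\phi \in L(\Theta)$ is defined on each of the $\Theta$-models $\mathcal{M}_1$ and $\mathcal{M}_2$, so that for $k \in \{ 1, 2 \}$ the negative part is simply the relative complement $Th^-_\mathcal{L}(\mathcal{M}_k, w_k) = L(\Theta) \setminus Th^+_\mathcal{L}(\mathcal{M}_k, w_k)$. Since the positive parts agree, taking complements inside the common set $L(\Theta)$ forces the negative parts to agree as well, and hence $Th_\mathcal{L}(\mathcal{M}_1, w_1) = Th_\mathcal{L}(\mathcal{M}_2, w_2)$, as required.

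The argument is essentially routine once the symmetry observation is in place, so I do not expect a genuine obstacle. The only step demanding a little care is the final complementation: one must invoke the Occurrence condition to be sure that each formula of $L(\Theta)$ really does receive a definite truth value at every point of both models, so that the positive and negative parts are genuinely complementary to one another and the equality of positive parts propagates to the negative parts.
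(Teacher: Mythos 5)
Your proposal is correct and follows essentially the same route as the paper, which leaves the proof implicit by appealing to the earlier remark that a bisimulation is a symmetric asimulation and hence an asimulation in both directions, yielding mutual inclusion of the positive theories. Your extra care in passing from positive theories to full theories via complementation in $L(\Theta)$ is sound and merely makes explicit a step the paper takes for granted.
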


Moreover if an abstract intuitionistic logic is invariant under
asimulations, then every formula of this logic is monotonic w.r.t.
accessibility relation. More precisely, the following lemma holds:
\begin{lemma}\label{L:monotonicity}
If $\mathcal{L}$ is an abstract intuitionistic logic that is
invariant under asimulations, then for every pointed
$\Theta$-model $(\mathcal{M}, w)$, for every $v \in W$ such that
$w\mathrel{R}v$, and for every $\phi \in L(\Theta)$ it is true
that:
$$
\mathcal{M}, w \models_\mathcal{L} \phi \Rightarrow \mathcal{M}, v
\models_\mathcal{L} \phi.
$$
\end{lemma}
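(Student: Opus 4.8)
The plan is to deduce monotonicity directly from the hypothesis that $\mathcal{L}$ is invariant under asimulations. Observe that the desired conclusion is precisely the inclusion $Th^+_\mathcal{L}(\mathcal{M}, w) \subseteq Th^+_\mathcal{L}(\mathcal{M}, v)$: if $\mathcal{M}, w \models_\mathcal{L}\phi$, then $\phi$ lies in the left-hand side, and the inclusion then forces $\mathcal{M}, v \models_\mathcal{L}\phi$. By the definition of invariance under asimulations (Definition \ref{D:properties}), this inclusion is guaranteed the moment we exhibit an asimulation from $(\mathcal{M}, w)$ to $(\mathcal{M}, v)$. So the whole task reduces to building one such asimulation, where both pointed models are taken to be the single model $\mathcal{M}$ with designated worlds $w$ and $v$, respectively.

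The asimulation I would use is the accessibility relation itself, realized across the two copies of $\mathcal{M}$ in both orientations. Concretely, set $A := \{(x, y) \in W \times W \mid x \mathrel{R} y\}$ and let it relate a world of either copy to a world of the other copy exactly when the two are $R$-related. Condition \eqref{E:c22} holds by construction, and \eqref{E:c11} holds because $w \mathrel{R} v$ is our hypothesis. For \eqref{E:c33}, any pair in $A$ satisfies $x \mathrel{R} y$, so monotonicity of $V$ with respect to $R$ gives $x \in V(p) \Rightarrow y \in V(p)$.

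The one clause that needs a small argument is the back condition \eqref{E:c44}. Given $v' \mathrel{A} s$ (hence $v' \mathrel{R} s$) and $s \mathrel{R} t$, I would take the witness $u := t$: transitivity of $R$ yields $v' \mathrel{R} t$, so the first conjunct $v' \mathrel{R_i} u$ holds, while reflexivity of $R$ gives $t \mathrel{R} t$, which is exactly what is needed for both cross-copy pairs $t \mathrel{A} u$ and $u \mathrel{A} t$. This is where the preorder structure of $R$ is essential, transitivity to reach $t$ from $v'$ and reflexivity to make $A$ behave symmetrically on the witness, as the (possibly non-symmetric) back clause demands. Once \eqref{E:c44} is checked, $A$ is an asimulation and the lemma follows. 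I do not anticipate a genuine obstacle here; the only point requiring care is keeping track of which copy each world belongs to when verifying \eqref{E:c44}, since $\mathcal{M}_1$ and $\mathcal{M}_2$ are literally the same model.
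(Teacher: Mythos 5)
Your proof is correct and follows essentially the same route as the paper: both exhibit an explicit asimulation from $(\mathcal{M},w)$ to $(\mathcal{M},v)$ and conclude by invariance, the only difference being that you take $A$ to be all of $R$ across the two copies, whereas the paper uses the smaller relation $\{(w,v)\}\cup\{(u,u)\mid v\mathrel{R}u\}$; your verification of \eqref{E:c44} via transitivity and reflexivity of $R$ is sound.
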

\begin{proof}
We define asimulation $A$ from $(\mathcal{M}, w)$ to
$(\mathcal{M}, v)$ setting:
$$
A:= \{ (w, v) \} \cup \{ (u,u) \mid u \in W, v\mathrel{R}u \}.
$$
The lemma then follows by asimulation invariance of $\mathcal{L}$.
\end{proof}
Note that for Lemma \ref{L:monotonicity} one does not even need to
assume that $\mathsf{IL} \trianglelefteq \mathcal{L}$.
The next lemma sums up some well-known facts about
$\mathsf{IL}$:
\begin{lemma}\label{L:properties}
$\mathsf{IL}$ is invariant under asimulations, intuitionistically compact and
has TUP.
\end{lemma}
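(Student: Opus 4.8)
The goal is to prove Lemma~\ref{L:properties}: that $\mathsf{IL}$ is invariant under asimulations, intuitionistically compact, and has TUP. I would treat the three claims separately, since they draw on quite different machinery.

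\medskip

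The plan is to dispatch the first claim quickly. Invariance under asimulations for $\mathsf{IL}$ amounts to exactly the preservation fact already recorded right after Definition~\ref{D:asimulation}: if $A$ is an asimulation from $(\mathcal{M}_1,w_1)$ to $(\mathcal{M}_2,w_2)$, then $Th^+_{IL}(\mathcal{M}_1,w_1)\subseteq Th^+_{IL}(\mathcal{M}_2,w_2)$. So I would either cite this directly (pointing to \cite{o}) or include a short induction on formula complexity showing that each positive clause of the satisfaction relation is preserved along $A$, with the key case being $\varphi\to\psi$, handled by the \eqref{E:c44} back-clause together with the symmetry of the indices $i,j$ built into the s-type condition.

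\medskip

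For intuitionistic compactness, the plan is to reduce to ordinary first-order compactness via the standard translation of $IL(\Theta)$ into classical first-order logic over the frame language (with a binary accessibility predicate and unary predicates for the letters in $\Theta$, relativized to successors of a distinguished point). An $IL(\Theta)$-theory $(\Gamma,\Delta)$ translates into a first-order theory asserting, at the base point, the standard translations of the formulas in $\Gamma$ together with the negated standard translations of those in $\Delta$; finite $IL$-satisfiability of $(\Gamma,\Delta)$ corresponds to finite satisfiability of this first-order theory, and any classical model can be pruned to the submodel generated by the base point, which is a genuine intuitionistic Kripke model (a preorder with monotone valuation) satisfying $(\Gamma,\Delta)$ at that point. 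Here one must be a little careful that the translated theory forces the frame conditions (reflexivity, transitivity, heredity of the valuation) as a first-order theory, so that the model produced by classical compactness can be massaged into an intuitionistic model.

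\medskip

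The TUP claim is where I expect the main work to lie, and I would handle it as a genuine elementary-chain argument. Given an $IL$-elementary chain $\mathcal{M}_0\preccurlyeq_{IL}\cdots\preccurlyeq_{IL}\mathcal{M}_n\preccurlyeq_{IL}\cdots$ with union $\mathcal{M}:=\bigcup_n\mathcal{M}_n$, I first note that $\mathcal{M}$ really is a $\Theta$-model (the union of a chain of preorders with monotone valuations is again of this kind, as remarked in Section~\ref{S:Prel}), and then fix some $\mathcal{M}_n$ and a world $w\in W_n$ and prove $Th_{IL}(\mathcal{M}_n,w)=Th_{IL}(\mathcal{M},w)$ by induction on formula complexity. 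The atomic, conjunction, and disjunction cases are immediate from the definition of the union. The hard part is the implication clause $\varphi\to\psi$: its truth at $w$ in $\mathcal{M}$ quantifies over \emph{all} $R$-successors of $w$ in the union, including those that first appear at later stages $\mathcal{M}_m$ with $m>n$, so the induction hypothesis at level $n$ does not directly apply to such a witness $v$. The remedy is to locate, for any successor $v\in W$ appearing at some stage $m\geq n$, the containing model $\mathcal{M}_m$ and use the elementarity $\mathcal{M}_n\preccurlyeq_{IL}\mathcal{M}_m$ together with $\mathcal{M}_m\preccurlyeq_{IL}\mathcal{M}$ (the latter being what we are inductively establishing for the smaller subformulas at stage $m$) so that the relevant subformula is already decided correctly at $v$ in $\mathcal{M}_m$, and hence in $\mathcal{M}$ by induction hypothesis on complexity. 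I would phrase the induction so that the quantifier in the $\to$-case is absorbed by running the complexity induction simultaneously across all stages, which is the delicate bookkeeping point deserving the most care.
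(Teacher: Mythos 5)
Your proposal is correct, and for two of the three claims it follows the paper's own route: asimulation-invariance is discharged by appeal to the preservation result of \cite{o} (exactly as the paper does), and your TUP argument is the same simultaneous induction on formula complexity across all stages of the chain, with $\varphi\to\psi$ as the only nontrivial case --- your handling of a successor $v$ first appearing at a later stage $\mathcal{M}_m$ via $\mathcal{M}_n\preccurlyeq_{IL}\mathcal{M}_m$ and the induction hypothesis at stage $m$ is precisely the paper's use of a common stage $\mathcal{M}_k$ containing both $w$ and $v$ together with transitivity of $\preccurlyeq_{IL}$. The one genuine divergence is intuitionistic compactness: the paper simply cites a textbook result (\cite[Theorem 2.46]{chagrov}), whereas you sketch an actual proof by pushing $(\Gamma,\Delta)$ through the standard translation into classical first-order logic, adding the first-order frame axioms (reflexivity, transitivity, per-letter heredity), invoking classical compactness, and reading the resulting structure back as a Kripke model. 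That argument is sound --- and your caveat about making the frame conditions part of the translated theory is exactly the point that needs care; the pruning to the generated submodel is then harmless but not actually needed. What your version buys is self-containedness at the cost of a page of routine verification; what the citation buys is brevity. Either is acceptable, and your judgement that TUP is where the real work lies matches the paper's allocation of effort.
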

\begin{proof}
Invariance under asimulations follows from the main result of
\cite{o}. The intuitionistic compactness of $\mathsf{IL}$ is a well-known
fact and is established in many places, see e.g. \cite[Theorem
2.46]{chagrov}. The proof of TUP runs along the lines of standard
proof of TUP for modal formalisms (cf \cite[Observation 5]{otto});
we briefly sketch it here.

We need to show that if $\varphi \in IL(\Theta)$, $n \in \omega$
and $w \in W_n$, then
$$
\mathcal{M}_n, w \models_{IL} \varphi \Leftrightarrow \bigcup_{n
\in \omega}\mathcal{M}_n , w \models_{IL} \varphi;
$$
this is done by induction on $\varphi$ and the only non-trivial
case is when $\varphi = \psi \to \chi$. If $\mathcal{M}_n, w
\not\models_{IL} \psi \to \chi$, then there is a $v \in W_n$ such
that $w\mathrel{R_n}v$ and $\mathcal{M}_n, v \models_{IL}
(\{\psi\}, \{\chi\})$. But then, by induction hypothesis we must
have $\bigcup_{n \in \omega}\mathcal{M}_n, v \models_{IL}
(\{\psi\}, \{\chi\})$ and we also have $w\mathrel{(\bigcup_{n \in
\omega}R_n)}v$ so that $\bigcup_{n \in \omega}\mathcal{M}_n, w
\not\models_{IL} \psi \to \chi$. In the other direction, assume
that $\bigcup_{n \in \omega}\mathcal{M}_n, w \not\models_{IL} \psi
\to \chi$. Then, for some $v \in \bigcup_{n \in \omega}W_n$ such
that $w\mathrel{(\bigcup_{n \in \omega}R_n})v$ it is true that
$\bigcup_{n \in \omega}\mathcal{M}_n, v \models_{IL} (\{\psi\},
\{\chi\})$. But then, for some $k \geq n$, we must have both $w,v
\in W_k$ and $w\mathrel{R_k}v$, so that we get $\mathcal{M}_k, w
\not\models_{IL} \psi \to \chi$. By obvious transitivity of
$\preccurlyeq_{IL}$ we get then that $\mathcal{M}_n
\preccurlyeq_{IL} \mathcal{M}_k$, whence $\mathcal{M}_n, w
\not\models_{IL} \psi \to \chi$.
\end{proof}

Our main theorem is then that no proper extension of $\mathsf{IL}$
displays the combination of useful properties established in Lemma
\ref{L:properties}. In other words, we are going to establish the
following:
\begin{theorem}\label{L:main}
Let $\mathcal{L}$ be an abstract intuitionistic logic. If
$\mathsf{IL} \trianglelefteq \mathcal{L}$ and $\mathcal{L}$ is
invariant under asimulations, intuitionistically compact, and has the TUP,
then $\mathsf{IL} \equiv \mathcal{L}$.
\end{theorem}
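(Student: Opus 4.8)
The plan is to establish the one direction that is still open. Since $\mathsf{IL} \trianglelefteq \mathcal{L}$ is assumed, it suffices to show $\mathcal{L} \trianglelefteq \mathsf{IL}$, that is, that every $\phi \in L(\Theta)$ is $\mathcal{L}$-equivalent to some $\psi \in IL(\Theta)$. Fix $\Theta$ and $\phi$, and let $\Phi := \{ \psi \in IL(\Theta) \mid \phi \models_\mathcal{L} \psi \}$ be the set of its intuitionistic consequences, where $\phi \models_\mathcal{L} \psi$ abbreviates that every pointed $\Theta$-model satisfying $\phi$ also satisfies $\psi$. Everything reduces to the \emph{Key Claim}: every pointed $\Theta$-model that satisfies all of $\Phi$ already satisfies $\phi$. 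Granting this, the $L(\Theta)$-theory $(\Phi,\{\phi\})$ is $\mathcal{L}$-unsatisfiable, so by intuitionistic compactness of $\mathcal{L}$ some finite $(\Phi_0,\{\phi\})$ with $\Phi_0\subseteq\Phi$ is already unsatisfiable; then $\bigwedge\Phi_0\in IL(\Theta)$ is $\mathcal{L}$-equivalent to $\phi$ --- forward because the members of $\Phi_0$ are consequences of $\phi$, backward because $(\Phi_0,\{\phi\})$ is unsatisfiable --- which is exactly what we want.

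To prove the Key Claim, suppose $(\mathcal{M},w)$ satisfies every formula of $\Phi$. First I would use compactness to produce a model of $\phi$ whose positive intuitionistic theory is dominated by that of $(\mathcal{M},w)$. Indeed the theory $(\{\phi\}, Th^-_{IL}(\mathcal{M},w))$ is finitely $\mathcal{L}$-satisfiable: if some finite $(\{\phi\},\Delta_0)$ were not, then every model of $\phi$ would satisfy $\bigvee\Delta_0$, so $\bigvee\Delta_0\in\Phi$ and hence $\mathcal{M},w\models_{IL}\bigvee\Delta_0$, contradicting $\Delta_0\subseteq Th^-_{IL}(\mathcal{M},w)$. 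By intuitionistic compactness of $\mathcal{L}$ there is then a pointed model $(\mathcal{N},u)$ with $\mathcal{N},u\models_\mathcal{L}\phi$ and $Th^+_{IL}(\mathcal{N},u)\subseteq Th^+_{IL}(\mathcal{M},w)$.

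Next I would pass to $IL$-saturated $\mathcal{L}$-elementary extensions $\mathcal{N}\preccurlyeq_\mathcal{L}\mathcal{N}^\dagger$ and $\mathcal{M}\preccurlyeq_\mathcal{L}\mathcal{M}^\dagger$. Because these are $\mathcal{L}$-elementary, $\phi$ still holds at $(\mathcal{N}^\dagger,u)$ and the inclusion $Th^+_{IL}(\mathcal{N}^\dagger,u)\subseteq Th^+_{IL}(\mathcal{M}^\dagger,w)$ persists. By Corollary \ref{L:asimulationscorollary} the relation given by positive-theory inclusion is then an asimulation from $(\mathcal{N}^\dagger,u)$ to $(\mathcal{M}^\dagger,w)$, so invariance of $\mathcal{L}$ under asimulations yields $\phi\in Th^+_\mathcal{L}(\mathcal{M}^\dagger,w)$; as $\mathcal{M}\preccurlyeq_\mathcal{L}\mathcal{M}^\dagger$ this gives $\mathcal{M},w\models_\mathcal{L}\phi$, proving the Key Claim.

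The main obstacle is the existence of the $IL$-saturated $\mathcal{L}$-elementary extensions invoked above. I would build them by an $\mathcal{L}$-elementary chain whose successor stages realize, by genuine $R$-successors, every $\mathcal{L}(\Theta)$-theory that is finitely satisfiable by successors of a given node: for each such demand, $\mathcal{L}$-compactness produces a model into which the current one embeds, and the remark following Definition \ref{D:embedding} lets one treat such an $\mathcal{L}$-embedding as a genuine $\mathcal{L}$-elementary extension in which the demand is met; unions at limit stages remain $\mathcal{L}$-elementary extensions of every link precisely by the TUP, and iterating cofinally often secures saturation. The genuinely delicate step, peculiar to the negationless, pre-ordered setting, is realizing a type by a successor while preserving both monotonicity of the valuation and $\mathcal{L}$-elementarity; this is exactly where the tree-like intuitionistic unravellings of Lemma \ref{unravellinglemma} and the promotion of a total simulation to a homomorphism in Lemma \ref{L:homomorphism} are meant to do the work, allowing one to glue a realizing cone above the chosen node and to recognize the outcome as an $\mathcal{L}$-elementary extension.
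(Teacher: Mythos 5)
Your overall architecture matches the paper's: reduce to producing a pair of pointed models with $Th^+_{IL}$-inclusion that disagree on $\phi$ (your Key Claim plus the compactness extraction of $\bigwedge\Phi_0$ is a slightly tidier packaging of the paper's Proposition \ref{L:proposition1}, which instead ends with a disjunction $\bigvee\Gamma$ of finite conjunctions), then replace both models by saturated ones with the same $\mathcal{L}$-theories at the distinguished points, apply Corollary \ref{L:asimulationscorollary} to get an asimulation from the $\phi$-satisfying model to the $\phi$-falsifying one, and contradict invariance under asimulations. Your first two paragraphs, and the deduction of the Key Claim from the existence of the saturated extensions, are correct.

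The gap is in your final paragraph: the existence of those saturated $\mathcal{L}$-elementary extensions is the technical core of the whole theorem (Proposition \ref{L:saturation} in the paper), and your sketch omits the one device that makes the construction go through. If you merely apply $\mathcal{L}$-compactness to $Th_\mathcal{L}(\mathcal{M},w)$ together with ``realization demands,'' the resulting model need not contain a copy of $\mathcal{M}$ at all, and there is no way to express in $\mathcal{L}$ --- which has no names for worlds and no global modality --- that a given type must be realized \emph{above a specified node} $v$ rather than somewhere else. The paper resolves both problems by expanding the vocabulary: it first unravels $\mathcal{M}$ around $w$ (so each world has a unique predecessor chain) and adds propositional letters $q^+_v,q^-_v$ naming the up-set and the complement of the down-set of each world, plus fresh letters $r^+_\Gamma,r^-_\Delta$ for each finitely realizable type; the formulas $(r^+_\Gamma\to r^-_\Delta)\to q^-_{v}$ then force a realizing successor of the image of $v$, and the letters $q^\pm_v$ are exactly what make the total simulation of Lemma \ref{L:homomorphism} injective and order-reflecting, i.e.\ an actual $\mathcal{L}$-embedding (Lemma \ref{L:lemma1}). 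Your appeal to Lemmas \ref{unravellinglemma} and \ref{L:homomorphism} alone cannot deliver this: a homomorphism extracted from a simulation is in general neither injective nor theory-preserving without the naming apparatus. So the sketch correctly locates the delicate step but does not supply the idea that overcomes it.
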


\section{The proof of Theorem \ref{L:main}}
Before we start with the proof, we need one more piece of
notation. If $\mathcal{L}$ is an abstract intuitionistic logic,
$\Theta$ a vocabulary, and $\Gamma \subseteq L(\Theta)$, then we
let $Mod_\mathcal{L}(\Theta,\Gamma)$ denote the class of pointed
$\Theta$-models $(\mathcal{N}, u)$ such that for every $\phi \in
\Gamma$ it is true that:
$$
\mathcal{N}, u \models_\mathcal{L} \phi.
$$
If $\Gamma = \{ \phi \}$ for some $\phi \in L(\Theta)$ then
instead of $Mod_\mathcal{L}(\Theta,\Gamma)$ we simply write
$Mod_\mathcal{L}(\Theta,\phi)$.

We now start by establishing a couple of technical facts first:

\begin{proposition}\label{L:proposition1}
Let $\mathcal{L}$ be a almost strong S-closed  abstract intuitionistic
logic extending $\mathsf{IL}$. Suppose that $\mathsf{IL}
\not\equiv\mathcal{L}$. Then, there are $\phi \in L(\Theta_\phi)$
and pointed $\Theta_\phi$-models $(\mathcal{M}_1, w_1)$,
$(\mathcal{M}_2, w_2)$ such that $Th^+_{IL}(\mathcal{M}_1, w_1)
\subseteq Th^+_{IL}(\mathcal{M}_2, w_2)$ while $\mathcal{M}_1, w_1
\models_\mathcal{L} \phi$ and $\mathcal{M}_2, w_2
\not\models_\mathcal{L} \phi$.
\end{proposition}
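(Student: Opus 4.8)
The plan is to prove the statement by fixing a single witnessing formula coming from the failure of expressive equivalence and showing that, unless the two models demanded by the statement exist, that formula would be definable in $\mathsf{IL}$. Since $\mathsf{IL} \trianglelefteq \mathcal{L}$ always holds, the hypothesis $\mathsf{IL} \not\equiv \mathcal{L}$ forces $\mathcal{L} \not\trianglelefteq \mathsf{IL}$, so there is a vocabulary $\Theta$ and a $\phi \in L(\Theta)$ not $\mathcal{L}$-equivalent to any member of $IL(\Theta)$. Using the Occurrence and Expansion clauses I would pass to the finite $\Theta_\phi$ and arrange $\phi \in L(\Theta_\phi)$; since $IL(\Theta_\phi) \subseteq IL(\Theta)$, $\phi$ remains non-equivalent to every intuitionistic formula over $\Theta_\phi$. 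It then suffices to prove that this $\phi$ is not preserved under inclusion of positive intuitionistic theories, which is exactly the assertion that the required pair of models exists.

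So I would assume toward a contradiction that $\phi$ \emph{is} preserved, i.e.\ that $Th^+_{IL}(\mathcal{M}_1, w_1) \subseteq Th^+_{IL}(\mathcal{M}_2, w_2)$ together with $\mathcal{M}_1, w_1 \models_\mathcal{L} \phi$ always yields $\mathcal{M}_2, w_2 \models_\mathcal{L} \phi$, and I would show $\phi$ is then $\mathsf{IL}$-definable. The candidate defining theory is the set of intuitionistic consequences $\Sigma := \{ \psi \in IL(\Theta_\phi) \mid Mod_\mathcal{L}(\Theta_\phi, \phi) \subseteq Mod_\mathcal{L}(\Theta_\phi, \psi)\}$. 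That $Mod_\mathcal{L}(\Theta_\phi, \phi) \subseteq Mod_\mathcal{L}(\Theta_\phi, \Sigma)$ is immediate, so the whole weight rests on the converse inclusion.

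The key step is to show that every $(\mathcal{N}, u) \in Mod_\mathcal{L}(\Theta_\phi, \Sigma)$ satisfies $\phi$. Here I would examine the $\mathcal{L}(\Theta_\phi)$-theory $(\{\phi\}, Th^-_{IL}(\mathcal{N}, u))$ and check that it is finitely $\mathcal{L}$-satisfiable: if $(\{\phi\}, \Delta')$ were unsatisfiable for some finite $\Delta' \subseteq Th^-_{IL}(\mathcal{N}, u)$, then every $\mathcal{L}$-model of $\phi$ would satisfy $\bigvee \Delta'$; closure of $\mathcal{L}$ under $\vee$ puts $\bigvee \Delta'$ into $\Sigma$, so $(\mathcal{N}, u)$ would satisfy $\bigvee \Delta'$ and hence some $\chi \in \Delta'$, contradicting $\chi \in Th^-_{IL}(\mathcal{N}, u)$ (the case $\Delta' = \emptyset$ is disposed of by noting that an unsatisfiable $\phi$ would equal $\bot$, hence be intuitionistic). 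Intuitionistic compactness of $\mathcal{L}$ then produces $(\mathcal{M}, w) \models_\mathcal{L} (\{\phi\}, Th^-_{IL}(\mathcal{N}, u))$; since no intuitionistic formula failing at $(\mathcal{N}, u)$ holds at $(\mathcal{M}, w)$, we get $Th^+_{IL}(\mathcal{M}, w) \subseteq Th^+_{IL}(\mathcal{N}, u)$, and the assumed preservation (with $\mathcal{M}, w \models_\mathcal{L} \phi$) forces $\mathcal{N}, u \models_\mathcal{L} \phi$, as required.

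With $Mod_\mathcal{L}(\Theta_\phi, \Sigma) \subseteq Mod_\mathcal{L}(\Theta_\phi, \phi)$ in hand, a second appeal to intuitionistic compactness, applied to the unsatisfiable theory $(\Sigma, \{\phi\})$, yields a finite $\Sigma_0 \subseteq \Sigma$ with $Mod_\mathcal{L}(\Theta_\phi, \Sigma_0) \subseteq Mod_\mathcal{L}(\Theta_\phi, \phi)$; then $\bigwedge \Sigma_0 \in IL(\Theta_\phi)$ is $\mathcal{L}$-equivalent to $\phi$, contradicting the choice of $\phi$. Hence preservation must fail, and its failure is precisely the existence of models $(\mathcal{M}_1, w_1)$, $(\mathcal{M}_2, w_2)$ as in the statement. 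I expect the main obstacle to be the first compactness step: one has to choose exactly the theory $(\{\phi\}, Th^-_{IL}(\mathcal{N}, u))$ so that its satisfiability is equivalent to the existence of a model of $\phi$ whose positive intuitionistic theory is included in that of $(\mathcal{N}, u)$, and then use closure under disjunction to convert a finite failure of satisfiability into an intuitionistic consequence of $\phi$.
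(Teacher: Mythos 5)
Your argument is correct, but it establishes the key implication --- that preservation of $\phi$ under inclusion of positive intuitionistic theories would force $\phi$ to be $\mathsf{IL}$-definable --- by a route dual to the paper's. The paper approximates $Mod_\mathcal{L}(\Theta_\phi,\phi)$ from the inside: for each $(\mathcal{N},u)\models_\mathcal{L}\phi$ it applies compactness to $(Th^+_{IL}(\mathcal{N},u),\{\phi\})$ to extract a finite sufficient condition $\bigwedge\Psi_{(\mathcal{N},u)}$ implying $\phi$, and a second compactness application to $(\{\phi\},\{\bigwedge\Psi_{(\mathcal{N},u)}\mid\ldots\})$ covers $Mod_\mathcal{L}(\Theta_\phi,\phi)$ by finitely many of these, so the defining formula is a disjunction of conjunctions $\bigvee\Gamma$. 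You approximate from the outside via the set $\Sigma$ of intuitionistic consequences of $\phi$: your first compactness application is a genuine separation lemma (every model of $\Sigma$ lies $Th^+_{IL}$-above some model of $\phi$, obtained by satisfying $(\{\phi\},Th^-_{IL}(\mathcal{N},u))$, with closure under $\vee$ converting a finite failure into a consequence in $\Sigma$), the preservation hypothesis then closes the loop, and a second compactness application yields the single conjunction $\bigwedge\Sigma_0$ as the defining formula --- the shape of the classical ``set of consequences'' proofs of preservation theorems. A further structural difference is that the paper runs the argument for all $\phi$ simultaneously and contradicts $\mathsf{IL}\not\equiv\mathcal{L}$ globally, whereas you fix a single non-$\mathsf{IL}$-definable witness at the outset; this is why you need the (correct, and in the paper only implicit) Occurrence/Expansion remark ensuring $\phi$ stays non-definable over the finite vocabulary $\Theta_\phi$. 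Both proofs use intuitionistic compactness exactly twice and the closure of $\mathcal{L}$ under $\wedge$ and $\vee$ (in your disjunction step it is really closure of $IL$ under $\vee$ that matters, since $\Delta'$ consists of intuitionistic formulas); the only loose ends in your write-up are the trivial degenerate cases $\Sigma_0=\varnothing$ (then $\phi\equiv\top$) and the empty right-hand side in the compactness extractions, which are handled the same way the paper silently handles them.
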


\begin{proof}
Suppose that for an arbitrary $\phi \in L(\Theta_\phi)$ we have
shown that:

\begin{center}
\begin{itemize}
\item[(i)]  $Mod_\mathcal{L}(\Theta_\phi,\phi) =
\bigcup_{(\mathcal{N}, u) \in Mod_\mathcal{L}(\Theta_\phi,\phi)}
Mod_\mathcal{L}(\Theta_\phi,Th^+_{IL}(\mathcal{N}, u))$,

\end{itemize}
\end{center}
Let  $(\mathcal{N}, u) \in Mod_\mathcal{L}(\Theta_\phi,\phi)$ be
arbitrary. The above implies that every $\Theta_\phi$-model of
$Th^+_{IL}(\mathcal{N}, u)$ must be a model of $\phi$. But then
the theory $ (Th^+_{IL}(\mathcal{N}, u), \{ \phi \})$ is
$\mathcal{L}$-unsatisfiable. By the intuitionistic compactness of
$\mathcal{L}$, for some finite $\Psi_{(\mathcal{N}, u)} \subseteq
Th^+_{IL}(\mathcal{N}, u)$ (and we can pick a unique one using the
Axiom of Choice), the theory $(\Psi_{(\mathcal{N}, u)} ,\{ \phi\})$ is
$\mathcal{L}$-unsatisfiable. Hence, $\bigwedge \Psi_{(\mathcal{N},
u)} $ logically implies $\phi$ in $\mathcal{L}$. Then, given (i),
we get that
\begin{center}
\begin{itemize}
 \item [(ii)] $Mod_\mathcal{L}(\Theta_\phi,\phi) = \bigcup_{(\mathcal{N}, u) \in Mod_\mathcal{L}(\Theta_\phi,\phi)}Mod_\mathcal{L}(\Theta_\phi,\Psi_{(\mathcal{N}, u)})$.
\end{itemize}
\end{center}
However, this means that the theory $(\{ \phi \}, \{ \bigwedge
\Psi_{(\mathcal{N}, u)} \mid (\mathcal{N}, u) \in
Mod_\mathcal{L}(\Theta_\phi,\phi) \})$ is
$\mathcal{L}$-unsatisfiable and by (i), for some finite $\Gamma
\subseteq  \{ \bigwedge \Psi_{(\mathcal{N}, u)} \mid (\mathcal{N},
u) \in Mod_\mathcal{L}(\Theta_\phi,\phi) \}$, the theory $(\{ \phi
\}, \Gamma)$ is $\mathcal{L}$-unsatisfiable. This means that  $
Mod_\mathcal{L}(\Theta_\phi,\phi) \subseteq
Mod_\mathcal{L}(\Theta_\phi,\bigvee\Gamma)$. So, using (ii), since
clearly $Mod_\mathcal{L}(\Theta_\phi,\bigvee\Gamma) \subseteq
\bigcup_{(\mathcal{N}, u) \in
Mod_\mathcal{L}(\Theta_\phi,\phi)}Mod_\mathcal{L}(\Theta_\phi,\Psi_{(\mathcal{N},
u)})$, we get that:
\begin{center}
\begin{itemize}
 \item [(iii)] $Mod_\mathcal{L}(\Theta_\phi,\phi) = Mod_\mathcal{L}(\Theta_\phi,\bigvee\Gamma)$.
\end{itemize}
\end{center}
Now, $ \bigvee \Gamma$ is a perfectly good formula of
$IL(\Theta_\phi)$ involving only finitary conjunctions and
disjunctions. So we have shown that every $\phi \in
L(\Theta_\phi)$ is just an intuitionistic $\Theta_\phi$-formula
and hence that $\mathcal{L} \equiv \mathsf{IL}$ which is in
contradiction with the hypothesis of the proposition.

Therefore, (i) must fail for at least one $\phi \in
L(\Theta_\phi)$, and clearly, for this $\phi$ it can only fail if
$$
\bigcup_{(\mathcal{N}, u) \in Mod_\mathcal{L}(\Theta_\phi,\phi)}
Mod_\mathcal{L}(\Theta_\phi,Th^+_{IL}(\mathcal{N}, u))
\not\subseteq Mod_\mathcal{L}(\Theta_\phi,\phi).
$$
 But the latter
means that for some pointed intuitionistic $\Theta_\phi$-model
$(\mathcal{M}_1, w_1)$ such that $\mathcal{M}_1, w_1
\models_\mathcal{L}\phi$ there is another $\Theta_\phi$-model
$(\mathcal{M}_2, w_2)$ such that both $\mathcal{M}_2, w_2
\not\models_\mathcal{L}\phi$ and $Th^+_{IL}(\mathcal{M}_1, w_1)$
is satisfied at $(\mathcal{M}_2, w_2)$. The latter means, in turn,
that we have $Th^+_{IL}(\mathcal{M}_1, w_1) \subseteq
Th^+_{IL}(\mathcal{M}_2, w_2)$ as desired.
\end{proof}

Assume $\mathcal{M}$ is a $\Theta$-model. We define that
$\Theta_\mathcal{M}$ is $\Theta \cup \{ q^+_w, q^-_w \mid w \in W
\}$ such that $\Theta \cap \{ q^+_w, q^-_w \mid w \in W \} =
\varnothing$, and we define that $[\mathcal{M}] = (W, R, [V])$ is
the $\Theta_\mathcal{M}$-model, such that $W$ and $R$ are just
borrowed from $\mathcal{M}$ and $[V]$ coincides with $V$ on
elements of $\Theta$, whereas for arbitrary $v, w \in W$ we set
that  $v \in [V](q^+_w)$ iff $w\mathrel{R}v$ and $v \notin
[V](q^-_w)$ iff $v\mathrel{R}w$.

\begin{lemma}\label{L:lemma1}
Let $\mathcal{L}$ be a bisimulation invariant abstract
intuitionistic logic extending $\mathsf{IL}$, let $(\mathcal{M},
w)$ be a pointed $\Theta$-model, and let $(\mathcal{N}, v)$
another pointed $\Theta_{\mathcal{M}^{un}_w}$-model. Assume that
$Th_\mathcal{L}(\mathcal{N}, v) =
Th_\mathcal{L}([\mathcal{M}^{un}_w], w)$. Then there exists an
$\mathcal{L}$-embedding $f$ of $[\mathcal{M}^{un}_w]$ into
$\mathcal{N}$ such that $f(w) = v$.
\end{lemma}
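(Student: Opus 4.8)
The plan is to build the embedding $f$ by recursion on the length of the sequences in $W^{un}_w$, the recursion being driven by the observation that the fresh letters $q^+_{\bar s},q^-_{\bar s}$ let one \emph{name} each node of the unravelling and, crucially, read off the value there of an arbitrary (possibly non-monotone) $\mathcal L$-formula. The engine is the following biconditional: using the antisymmetry of $R^{un}_w$ (Lemma \ref{unravellinglemma}) together with the semantics of the fresh letters, for every node $\bar s\in W^{un}_w$ and every $\chi\in L(\Theta_{\mathcal M^{un}_w})$ one checks that
\[
[\mathcal M^{un}_w],\bar s\models_\mathcal{L}\chi \iff [\mathcal M^{un}_w], w\models_\mathcal{L} q^+_{\bar s}\to(q^-_{\bar s}\vee\chi).
\]
Here $q^+_{\bar s}$ restricts attention to the cone above $\bar s$, while the disjunct $q^-_{\bar s}$ is true at every node strictly above $\bar s$ and false only at $\bar s$, so the implication in effect asserts ``$\chi$ holds at $\bar s$ alone'' and the passage through strictly higher nodes (where $\chi$ might fail) is neutralised. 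Since $q^+_{\bar s},q^-_{\bar s}$ are letters, hence in $IL\subseteq L$, and $\mathcal L$ is closed under intuitionistic $\to,\vee$, the displayed formula is a genuine $\mathcal L$-formula, so by the hypothesis $Th_\mathcal{L}(\mathcal N,v)=Th_\mathcal{L}([\mathcal M^{un}_w],w)$ its truth transfers to $(\mathcal N,v)$. An entirely analogous local biconditional holds with $w$ replaced by any node $\bar s$ and $\bar s$ by a successor $\bar s'\geq\bar s$, which is what makes the recursion go through at each step.

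I would set $f(w):=v$ (the base case is the hypothesis) and, assuming $f(\bar s)$ already defined with $Th_\mathcal{L}(\mathcal N,f(\bar s))=Th_\mathcal{L}([\mathcal M^{un}_w],\bar s)$, treat an immediate $R^{un}_w$-successor $\bar s'$ of $\bar s$ as follows. Instantiating the local biconditional at $\bar s$ and transferring it along the secured theory equality at $f(\bar s)$ gives, for every $\mathcal L$-formula $\chi$, that $f(\bar s)\models_\mathcal{L} q^+_{\bar s'}\to(q^-_{\bar s'}\vee\chi)$ iff $\bar s'\models_\mathcal{L}\chi$. Reading this with $\chi$ taken to be the disjunction of a finite negative selection shows that the theory $(\{q^+_{\bar s'}\}\cup Th^+_\mathcal{L}(\bar s'),\,\{q^-_{\bar s'}\}\cup Th^-_\mathcal{L}(\bar s'))$ is finitely satisfiable by $S$-successors of $f(\bar s)$; I would then choose (by the Axiom of Choice) an $S$-successor $f(\bar s')$ of $f(\bar s)$ realising this whole theory, so that $Th_\mathcal{L}(\mathcal N,f(\bar s'))=Th_\mathcal{L}([\mathcal M^{un}_w],\bar s')$ and $f(\bar s')$ satisfies $q^+_{\bar s'}$ but not $q^-_{\bar s'}$. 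Since the immediate predecessor of $\bar s'$ lying directly below it is unique in $\mathcal M^{un}_w$ (exactly as exploited in Lemma \ref{L:homomorphism}), this prescription defines a genuine function.

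It remains to verify that $f$ is an $\mathcal L$-embedding, and each clause reduces to the maintained theory equality $Th_\mathcal{L}(\mathcal N,f(\bar s))=Th_\mathcal{L}([\mathcal M^{un}_w],\bar s)$. The atomic homomorphism clause is immediate (atoms are $IL$-formulas), and the forward half of \eqref{E:c4e} holds by construction, immediate successors being sent to $S$-successors and general $R^{un}_w$-pairs being handled by transitivity of $S$. Injectivity follows because distinct nodes $\bar s\neq\bar t$ are separated already at the atomic level by $q^+_{\bar s}$ or $q^+_{\bar t}$, hence receive distinct $\mathcal L$-theories and distinct images. For the backward half of \eqref{E:c4e}, if $f(\bar s)\mathrel{S}f(\bar t)$ then valuation monotonicity gives $f(\bar t)\models q^+_{\bar s}$, whence $\bar t\models q^+_{\bar s}$ by matching theories, i.e.\ $\bar s\mathrel{R^{un}_w}\bar t$. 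Thus $f$ is an injective homomorphism satisfying \eqref{E:c3e} and \eqref{E:c4e}.

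The main obstacle is the selection step: the locating formulas by themselves only yield the one-sided inclusion $Th^+_\mathcal{L}(\bar s')\subseteq Th^+_\mathcal{L}(f(\bar s'))$ for an arbitrary successor carrying the right $q$-pattern, and the hard part is to collapse the \emph{finitely-satisfiable-by-successors} family above into a single successor of $f(\bar s)$ whose entire $\mathcal L$-theory, positive and negative, coincides with that of $\bar s'$. The $q^-$-disjunct is precisely the trick that defeats non-monotonicity of $\mathcal L$-formulas, making the transfer faithful node-by-node rather than cone-by-cone; but passing from finite satisfiability to a genuine point-realisation at a single successor is where the argument is genuinely delicate and where the saturation/compactness features available in the ambient setting have to do the work. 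Finally, I note that bisimulation invariance enters only through locality — a pointed model is bisimilar to its point-generated submodel, so $\mathcal L$-theories are computed inside cones — which is what licenses applying the locating device at each already-matched node $f(\bar s)$ and not merely at the root $v$.
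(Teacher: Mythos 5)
There is a genuine gap, and it sits exactly where you flag it yourself: the passage from ``$(\{q^+_{\bar s'}\}\cup Th^+_\mathcal{L}(\bar s'),\ \{q^-_{\bar s'}\}\cup Th^-_\mathcal{L}(\bar s'))$ is finitely $\mathcal{L}$-satisfiable by $S$-successors of $f(\bar s)$'' to ``choose a single $S$-successor realising the whole theory.'' That step is an instance of $\mathcal{L}$-saturation of $\mathcal{N}$, which is \emph{not} a hypothesis of this lemma: $(\mathcal{N},v)$ is an arbitrary pointed model with the same $\mathcal{L}$-theory at the root. Nor can saturation be smuggled in, since this lemma is invoked inside the proof of Proposition \ref{L:saturation} precisely in order to \emph{build} saturated models; assuming it here would be circular. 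Intuitionistic compactness only yields a realisation in \emph{some} model, not at a successor of $f(\bar s)$ inside the given $\mathcal{N}$. Your ``engine'' biconditional $[\mathcal M^{un}_w],\bar s\models_\mathcal{L}\chi \Leftrightarrow [\mathcal M^{un}_w], w\models_\mathcal{L} q^+_{\bar s}\to(q^-_{\bar s}\vee\chi)$ is correct (antisymmetry of $R^{un}_w$ does make it work) but it transfers one formula at a time, and intersecting the resulting families of witnesses over all $\chi$ is exactly the infinitary selection you cannot perform.

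The paper's proof avoids this with a trick you are missing: for each node $\bar s'$ it forms the set $\Gamma_{\bar s'}=\{q^+_{\bar s'}\to\phi\mid\phi\in Th^+_\mathcal{L}([\mathcal M^{un}_w],\bar s')\}\cup\{\psi\to q^-_{\bar s'}\mid\psi\in Th^-_\mathcal{L}([\mathcal M^{un}_w],\bar s')\}$, observes that $\Gamma_{\bar s'}\subseteq Th^+_\mathcal{L}([\mathcal M^{un}_w],w)$, hence $\Gamma_{\bar s'}\subseteq Th^+_\mathcal{L}(\mathcal N,v)$ by the root hypothesis, and hence (Lemma \ref{L:monotonicity}) $\Gamma_{\bar s'}$ holds at every relevant node of $\mathcal N$ above $v$. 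Consequently \emph{any} $S$-successor of $f(\bar s)$ that satisfies $q^+_{\bar s'}$ and falsifies $q^-_{\bar s'}$ automatically has exactly the $\mathcal{L}$-theory of $\bar s'$ --- the infinitely many implications do the pinning, so only the single formula $q^+_{\bar s'}\to q^-_{\bar s'}$, which fails at $f(\bar s)$, is needed to produce the witness. (The paper packages this as the verification that the relation $Z$, matching nodes by equality of $\mathcal{L}$-theories, is a total simulation, and then quotes Lemma \ref{L:homomorphism}; your recursion is essentially that composite.) The rest of your argument --- injectivity via the $q^+$ letters, the backward half of \eqref{E:c4e} via persistence of $q^+_{\bar s}$, and clause \eqref{E:c3e} from the maintained theory equality --- matches the paper and is fine once the selection step is repaired in this way.
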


\begin{proof} Consider the relation $Z$ relating $w$ only to $v$,
and every other element $\bar{u}_k \in W^{un}_w$ to the worlds in
$\mathcal{N}$ that $\mathcal{L}$-satisfy
$Th_\mathcal{L}([\mathcal{M}^{un}_w], \bar{u}_k)$. We prove that
such $Z$ is a total simulation from $([\mathcal{M}^{un}_w], w)$ to
$(\mathcal{N}, v)$. Indeed, conditions \eqref{E:c2} and
\eqref{E:c3} are obviously satisfied. We treat condition
\eqref{E:c4}.

Assume that $\bar{v}_k\mathrel{Z}v'$ and that
$\bar{v}_k\mathrel{R^{un}_w}\bar{v}_n$ (then $n \geq k$). By
definition of $Z$ we have then $Th_\mathcal{L}(\mathcal{N}, v') =
Th_\mathcal{L}([\mathcal{M}^{un}_w], \bar{v}_k)$, and also, since
$w$ is the root of $\mathcal{M}^{un}_w$, we have, by Lemma
\ref{L:monotonicity} and invariance of $\mathcal{L}$ under
asimulations that
$$
Th^+_\mathcal{L}([\mathcal{M}^{un}_w], w) \subseteq
Th^+_\mathcal{L}([\mathcal{M}^{un}_w], \bar{v}_k) =
Th^+_\mathcal{L}(\mathcal{N}, v').
$$
By definition of $[\mathcal{M}^{un}_w]$ and closure of
$\mathcal{L}$ w.r.t. intuitionistic implication, we have that the
set:
$$
\Gamma_{\bar{v}_n} = \{ q^+_{\bar{v}_n} \to \phi \mid \phi \in
Th^+_\mathcal{L}([\mathcal{M}^{un}_w], \bar{v}_n) \} \cup \{ \psi
\to q^-_{\bar{v}_n} \mid \psi \in
Th^-_\mathcal{L}([\mathcal{M}^{un}_w], \bar{v}_n) \}
$$
is a subset of $Th^+_\mathcal{L}([\mathcal{M}^{un}_w], w)$, thus
also of $Th^+_\mathcal{L}(\mathcal{N}, v')$. We also know that, by
$\bar{v}_k\mathrel{R^{un}_w}\bar{v}_n$, and by the fact that the
theory $(\{ q^+_{\bar{v}_n} \}\{ q^-_{\bar{v}_n}\})$ is
$\mathcal{L}$-satisfied at $([\mathcal{M}^{un}_w], \bar{v}_n)$, we
have:
 $$
 [\mathcal{M}^{un}_w], \bar{v}_k \not\models_\mathcal{L}
q^+_{\bar{v}_n}\to q^-_{\bar{v}_n}.
$$

 Therefore, $\mathcal{N}, v'
\not\models_\mathcal{L} q^+_{\bar{v}_n}\to q^-_{\bar{v}_n}$. So
take some $v'' \in U$ such that $v'\mathrel{R^\mathcal{N}}v''$ and $(\{
q^+_{\bar{v}_n} \}\{ q^-_{\bar{v}_n}\})$ is
$\mathcal{L}$-satisfied at $(\mathcal{N}, v'')$. By
$\Gamma_{\bar{v}_n}  \subseteq Th^+_\mathcal{L}(\mathcal{N}, v')$,
we also have that $\Gamma_{\bar{v}_n}  \subseteq
Th^+_\mathcal{L}(\mathcal{N}, v'')$, so that $
Th_\mathcal{L}(\mathcal{N}, v'') =
Th_\mathcal{L}([\mathcal{M}^{un}_w], \bar{v}_n)$, whence we get
that $\bar{v}_n\mathrel{Z}v''$ and condition \eqref{E:c4} is
verified.

By Lemma \ref{L:homomorphism}, there must be then a homomorphism
$f$ from $[\mathcal{M}^{un}_w]$ to $\mathcal{N}$ such that
$\bar{v}_n\mathrel{Z}f(\bar{v}_n)$ for all $\bar{v}_n \in
W^{un}_w$, so, in particular, $w\mathrel{Z}f(w)$  but by
definition $Z$ only relates $w$ to $v$, hence $f(w)= v$.

For this $f$, we get condition \eqref{E:c3e} immediately by
definition of $Z$. To establish the injectivity of $f$ assume that
$\bar{v}_n,\bar{u}_k \in W^{un}_w$ are such that $\bar{v}_n \neq
\bar{u}_k$. Then, by Lemma \ref{unravellinglemma}, either
$\bar{v}_n$ is not an $R^{un}_w$-successor of $\bar{u}_k$ or
$\bar{u}_k$ is not an $R^{un}_w$-successor of $\bar{v}_n$. If the
first, $f(\bar{u}_k)$ $\mathcal{L}$-satisfies $q^+_{\bar{u}_k}$ in
$\mathcal{N}$, while $f(\bar{v}_n)$ does not, so $f(\bar{u}_k)
\neq f(\bar{v}_n)$. If the second, then a symmetric thing happens
with $q^+_{\bar{v}_n}$.

Finally, if, for some $\bar{v}_n, \bar{u}_k \in W^{un}_w$,
$f(\bar{v}_n)$ is a successor of $f(\bar{u}_k)$ in $\mathcal{N}$,
we must have that $\mathcal{N}, f(\bar{v}_n) \vDash_\mathcal{L}
q^+_{\bar{u}_k}$, so that $[\mathcal{M}^{un}_w],
\bar{v}_n\vDash_\mathcal{L} q^+_{\bar{u}_k}$, which means by
definition of the valuation of $q^+_{\bar{u}_k}$ that $\bar{v}_n$
is an $R^{un}_w$-successor of $\bar{u}_k$ in
$[\mathcal{M}^{un}_w]$ so that condition \eqref{E:c4e} is verified
as well.
 \end{proof}

\begin{proposition}\label{L:saturation}
Let $(\mathcal{M}, w)$ be a pointed $\Theta$-model and let
$\mathcal{L}$ be an abstract intuitionistic logic which is
invariant under asimulations, intuitionistically compact, has TUP and extends
$\mathsf{IL}$. Then there is an $\mathcal{L}$-saturated pointed
$\Theta$-model $(\mathcal{N}, s)$ such that $w \in U$ and
$Th_\mathcal{L}(\mathcal{M}, w) = Th_\mathcal{L}(\mathcal{N}, w)$.
\end{proposition}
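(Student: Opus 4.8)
The plan is to build $(\mathcal{N},s)$ as the union of an $\mathcal{L}$-elementary chain of \emph{trees}, where each step is forced by intuitionistic compactness applied to a diagram expansion in the style of $[\mathcal{M}]$ and Lemma~\ref{L:lemma1}, and then to read off saturation at the union from the observation that each finite approximation of a type is captured by a single $\mathcal{L}(\Theta)$-formula. As a first reduction I would replace $(\mathcal{M},w)$ by its unravelling $\mathcal{M}_0:=\mathcal{M}^{un}_w$. By Lemma~\ref{unravellinglemma} this is a tree (with antisymmetric $R^{un}_w$) bisimilar to $(\mathcal{M},w)$, so by Lemma~\ref{L:bisimulationinvariance} we have $Th_\mathcal{L}(\mathcal{M}_0,w)=Th_\mathcal{L}(\mathcal{M},w)$, with $w$ the root of $\mathcal{M}_0$. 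It then suffices to produce an $\mathcal{L}$-saturated $\mathcal{L}$-elementary extension of $\mathcal{M}_0$, since the theory at $w$ is preserved along $\preccurlyeq_\mathcal{L}$.

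The core is a single-step construction: given a tree $\mathcal{P}$ with root $r$, I build an $\mathcal{L}$-elementary extension $\mathcal{P}^+$ in which every $L(\Theta)$-theory $(\Gamma,\Delta)$ that is finitely $\mathcal{L}$-satisfiable by successors of a world $v$ of $\mathcal{P}$ is realized by some successor of $v$. I would do this over the vocabulary of $[\mathcal{P}]$ augmented, for each such pair $\bigl(v,(\Gamma,\Delta)\bigr)$, by a fresh pair of letters $q^+_d,q^-_d$ locating a would-be witness $d$. Recalling that in $[\mathcal{P}]$ the letter $q^+_u$ is true exactly above $u$ and $q^-_u$ is false exactly below $u$, one sees that at the root: $q^+_d\to\theta$ forces $\theta$ at the witness, $\theta\to q^-_d$ forces $\theta$ to fail at the witness, and the failure of $q^+_d\to q^-_d$ forces the witness to exist. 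Accordingly I take the theory whose positive part is $Th^+_\mathcal{L}([\mathcal{P}],r)$ together with all $q^+_d\to q^+_v$, all $q^+_d\to\gamma$ for $\gamma\in\Gamma$, and all $\delta\to q^-_d$ for $\delta\in\Delta$, and whose negative part is $Th^-_\mathcal{L}([\mathcal{P}],r)$ together with all $q^+_d\to q^-_d$. Any finite fragment mentions finitely many witnesses $d$, and is satisfied inside $[\mathcal{P}]$ itself by interpreting each $q^+_d,q^-_d$ as the positional markers of an actual successor $u$ of $v$ realizing the relevant finite part of $(\Gamma,\Delta)$ (such $u$ exists by finite satisfiability, and persistence of truth then yields the asserted clauses at $r$). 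By intuitionistic compactness of $\mathcal{L}$ the whole theory is satisfied at some $(\mathcal{N}^*,s^*)$; its $\Theta_\mathcal{P}$-reduct has the same $\mathcal{L}$-theory at $s^*$ as $[\mathcal{P}]$ at $r$ (the two inclusions of complete theories force equality), so Lemma~\ref{L:lemma1} gives an $\mathcal{L}$-embedding of $[\mathcal{P}]$ into it, while the negative clauses produce for each target type a witness where $q^+_d$ holds and $q^-_d$ fails, pinned down by the remaining clauses as a successor of $v$ realizing $(\Gamma,\Delta)$. Finally, to keep the next source a tree, I let $\mathcal{P}^+$ be the unravelling of the $\Theta$-reduct of $\mathcal{N}^*$ around $s^*$; mapping each world of $\mathcal{P}$ (a branch) to the corresponding branch shows, via Lemma~\ref{unravellinglemma} and Lemma~\ref{L:bisimulationinvariance}, that $\mathcal{P}\preccurlyeq_\mathcal{L}\mathcal{P}^+$ and that all the required successor-types survive.

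Iterating yields an $\mathcal{L}$-elementary chain $\mathcal{M}_0\preccurlyeq_\mathcal{L}\mathcal{M}_1\preccurlyeq_\mathcal{L}\cdots$ of trees with $\mathcal{M}_{n+1}=\mathcal{M}_n^+$; set $\mathcal{N}:=\bigcup_{n\in\omega}\mathcal{M}_n$. By the TUP, $\mathcal{M}_n\preccurlyeq_\mathcal{L}\mathcal{N}$ for all $n$, so $w\in U$ and $Th_\mathcal{L}(\mathcal{N},w)=Th_\mathcal{L}(\mathcal{M}_0,w)=Th_\mathcal{L}(\mathcal{M},w)$. For saturation, let $v\in U$ and let $(\Gamma,\Delta)$ be finitely $\mathcal{L}$-satisfiable by successors of $v$ in $\mathcal{N}$. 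The key point, exactly as in the proofs of Lemma~\ref{L:asimulations} and Corollary~\ref{L:asimulationscorollary}, is that for finite $\Gamma'\subseteq\Gamma$ and $\Delta'\subseteq\Delta$ the statement ``some successor of $v$ satisfies $(\Gamma',\Delta')$'' is just $\mathcal{N},v\not\models_\mathcal{L}\bigwedge\Gamma'\to\bigvee\Delta'$, a claim about one formula of $L(\Theta)$ (available by Closure). Choosing $n$ with $v\in W_n$ and using $\mathcal{M}_n\preccurlyeq_\mathcal{L}\mathcal{N}$, this transfers down, so $(\Gamma,\Delta)$ is finitely $\mathcal{L}$-satisfiable by successors of $v$ in $\mathcal{M}_n$; by construction it is realized by some successor $u$ of $v$ in $\mathcal{M}_{n+1}\subseteq\mathcal{N}$, and $\mathcal{M}_{n+1}\preccurlyeq_\mathcal{L}\mathcal{N}$ gives $\mathcal{N},u\models_\mathcal{L}(\Gamma,\Delta)$. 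Hence $\mathcal{N}$ is $\mathcal{L}$-saturated, as required.

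I expect the single-step construction to be the main obstacle: setting up the diagram encoding so precisely that a model of the augmented theory is genuinely an $\mathcal{L}$-elementary extension realizing the prescribed successor-types. This is where Lemma~\ref{L:lemma1} (hence the unravelling and the $q^+_d,q^-_d$ machinery) and intuitionistic compactness are indispensable, and where care is needed so that re-unravelling at each stage does not break the $\mathcal{L}$-elementary-chain structure on which the appeal to the TUP depends.
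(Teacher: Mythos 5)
Your overall architecture coincides with the paper's: unravel around $w$, pass to the marked model $[\mathcal{M}^{un}_w]$, apply intuitionistic compactness to a theory written at the root so as to realize all finitely $\mathcal{L}$-satisfiable successor-types, recover an $\mathcal{L}$-embedding of the source via Lemma~\ref{L:lemma1}, iterate $\omega$ times, and close off with the TUP; the finite-satisfiability check (interpreting the fresh letters as positional markers of actual witnesses), the identification of each model with its image in the next, and the final transfer of saturation to the union are all as in the paper.

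There is, however, a genuine gap in your single-step encoding. You force the existence of the witness by putting $q^+_d \to q^-_d$ into the \emph{negative} part of the theory at the root: in the compactness model $\mathcal{N}^*$ this only yields \emph{some} world $d^*$ above the root $s^*$ at which $q^+_d$ holds and $q^-_d$ fails, hence at which $\Gamma$ holds and $\Delta$ fails. Your clause $q^+_d \to q^+_v$ then tells you that the \emph{letter} $q^+_v$ is true at $d^*$ --- but in $\mathcal{N}^*$ this is just a propositional variable; nothing in $Th_\mathcal{L}([\mathcal{P}],r)$ ties its extension to the up-set of $f(v)$, so $d^*$ need not be an $R$-successor of $f(v)$ (one can consistently place $d^*$ on a branch incomparable with $f(v)$ while making $q^+_v$ true there). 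Since saturation requires the type to be realized at an actual successor of $v$, the step fails as written. The paper's encoding avoids this by putting $(r^+_\Gamma \to r^-_\Delta) \to q^-_{\bar{v}_k}$ into the \emph{positive} part of the theory: this formula persists from the root up to $f(\bar{v}_k)$, where $q^-_{\bar{v}_k}$ is known to fail because the embedding preserves theories, so the antecedent $r^+_\Gamma \to r^-_\Delta$ must already fail \emph{at $f(\bar{v}_k)$ itself}, and the Kripke clause for $\to$ then produces the witness among the $R$-successors of that very world. Replacing your clauses $q^+_d \to q^+_v$ and the negative $q^+_d \to q^-_d$ by the single positive clause $(q^+_d \to q^-_d) \to q^-_v$ repairs the argument, after which the rest of your proof goes through.
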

\begin{proof}
Consider $(\mathcal{M}^{un}_w, w)$. By Lemma
\ref{unravellinglemma}.3, $(\mathcal{M}^{un}_w, w)$ is bisimilar
to $(\mathcal{M}, w)$, therefore, by Lemma
\ref{L:bisimulationinvariance} and invariance of $\mathcal{L}$
under asimulations, we get that $Th_\mathcal{L}(\mathcal{M}, w) =
Th_\mathcal{L}(\mathcal{M}^{un}_w, w)$. Consider then
$[\mathcal{M}^{un}_w]$ and assume $\bar{v}_k \in W^{un}_w$. For a
given $\bar{v}_k \in W^{un}_w$, we will denote the family of all
$L(\Theta_{[\mathcal{M}^{un}_w]})$-theories which are finitely
$\mathcal{L}$-satisfiable in $[\mathcal{M}^{un}_w]$ by successors
of $\bar{v}_k$ by $FinSat_{\bar{v}_k}$. For every $\bar{v}_k \in
W^{un}_w$ and every $(\Gamma, \Delta) \in FinSat_{\bar{v}_k}$ we
add to $\Theta_{[\mathcal{M}^{un}_w]}$ a fresh pair of
propositional letters $r^+_\Gamma$ and $r^-_\Delta$. Call the
resulting vocabulary $\Theta'$. Consider next the following
$L(\Theta')$-theory $(\Xi,Th^-_\mathcal{L}([\mathcal{M}^{un}_w],
w))$, where:
\begin{align*}
\Xi = &Th^+_\mathcal{L}([\mathcal{M}^{un}_w], w) \cup\\
&\cup  \{r^+_\Gamma \to \phi, \psi \to r^-_\Delta, (r^+_\Gamma \to
r^-_\Delta) \to q^-_{\bar{v}_k}\mid (\Gamma, \Delta) \in
FinSat_{\bar{v}_k}, \phi \in \Gamma, \psi \in \Delta, \bar{v}_k
\in W^{un}_w \},
\end{align*}
$(\Xi,Th^-_\mathcal{L}([\mathcal{M}^{un}_w], w))$ is itself
finitely $\mathcal{L}$-satisfiable, since given finite $\Xi_0
\subseteq \Xi$ and $\Omega_0 \subseteq
Th^-_\mathcal{L}([\mathcal{M}^{un}_w], w)$, we know wlog that:
$$
\Xi_0 = \Gamma \cup T(S_{(\Gamma_1, \Delta_1)}) \cup, \ldots, \cup
T(S_{(\Gamma_n, \Delta_n)}),
$$
where $\Gamma$ is a finite subtheory of
$Th^+_\mathcal{L}([\mathcal{M}^{un}_w], w)$, $(\Gamma_1, \Delta_1)
\in FinSat_{\bar{v}_{k_1}},\ldots, (\Gamma_n, \Delta_n)\in
FinSat_{\bar{v}_{k_n}}$, with $\bar{v}_{k_1},\ldots,
\bar{v}_{k_n}\in W^{un}_w$, for every $1 \leq i \leq n$
$S_{(\Gamma_i, \Delta_i)}$ is a finite subset of $(\Gamma_i,
\Delta_i)$ and, again for every $1 \leq i \leq n$:
$$
T(S_{(\Gamma_i, \Delta_i)}) = \{r^+_{\Gamma_i} \to \phi, \psi \to
r^-_{\Delta_i}, (r^+_{\Gamma_i} \to r^-_{\Delta_i}) \to
q^-_{\bar{v}_{k_i}}\mid \phi, \psi \in S_{(\Gamma_i, \Delta_i)}\}.
$$

By definition of $FinSat$, every $S_{(\Gamma_i, \Delta_i)}$ for $1
\leq i \leq n$ will be $\mathcal{L}$-satisfied in some
$v(S_{(\Gamma_i, \Delta_i)}) \in W^{un}_w$ such that
$\bar{v}_{k_i}\mathrel{R^{un}_w}v(S_{(\Gamma_i, \Delta_i)})$. It
is easy to see then, that every such $(\Xi_0,\Omega_0)$ will be
$\mathcal{L}$-satisfied in the extension of
$([\mathcal{M}^{un}_w], w)$, where for every $1 \leq i \leq n$ the
propositional letters $r^+_{\Gamma_i}, r^-_{\Delta_i}$ are
identified with $q^+_{v(S_{(\Gamma_i, \Delta_i)})},
q^-_{v(S_{(\Gamma_i, \Delta_i)})}$, respectively.

Therefore, by the intuitionistic compactness of $\mathcal{L}$,
$(\Xi,Th^-_\mathcal{L}([\mathcal{M}^{un}_w], w))$ itself is
$\mathcal{L}$-satisfiable. Let $(\mathcal{M}'_1, w_1)$ be a
pointed $\Theta'$-model $\mathcal{L}$-satisfying
$(\Xi,Th^-_\mathcal{L}([\mathcal{M}^{un}_w], w))$, and we may
assume that $w_1$ is the root of $\mathcal{M}'_1$ (if not, throw
away every world which is not accessible from $w_1$ in
$\mathcal{M}'_1$). Now, let $\mathcal{M}_1$ be the reduct of
$\mathcal{M}'_1$ to $\Theta_{[\mathcal{M}^{un}_w]}$. We know that
$(\mathcal{M}_1, w_1)$ $\mathcal{L}$-satisfies
$Th_\mathcal{L}([\mathcal{M}^{un}_w], w)$, therefore, by Lemma
\ref{unravellinglemma}.3, $((\mathcal{M}_1)^{un}_{w_1}, w_1)$ also
$\mathcal{L}$-satisfies $Th_\mathcal{L}([\mathcal{M}^{un}_w], w)$
and by Lemma \ref{L:lemma1}, there must be an embedding $f$ of
$[\mathcal{M}^{un}_w]$ into $(\mathcal{M}_1)^{un}_{w_1}$ with
$f(w) = w_1$. We now prove the following:

\emph{Claim}. If $\bar{v}_k \in W^{un}_w$ and a
$L(\Theta_{[\mathcal{M}^{un}_w]})$-theory $(\Gamma, \Delta)$ is
finitely $\mathcal{L}$-satisfiable in $[\mathcal{M}^{un}_w]$ by
successors of $\bar{v}_k$, then there is an $u \in
(W_1)^{un}_{w_1}$ such that
$f(\bar{v}_k)\mathrel{(R_1)^{un}_{w_1}}u$ and $(\Gamma, \Delta)$
is $\mathcal{L}$-satisfied at $((\mathcal{M}_1)^{un}_{w_1}, u)$.

Indeed, we have $(\Gamma, \Delta) \in FinSat_{\bar{v}_k}$, and we
also have $[\mathcal{M}^{un}_w], \bar{v}_k \not\models_\mathcal{L}
q^-_{\bar{v}_k}$, hence $(\mathcal{M}_1)^{un}_{w_1}, f(\bar{v}_k)
\not\models_\mathcal{L} q^-_{\bar{v}_k}$. Since
$(\mathcal{M}_1)^{un}_{w_1}$ is an intuitionistic unravelling,
$f(\bar{v}_k)$ is in fact some $\bar{u}_r \in (W_1)^{un}_{w_1}$.
By Lemma \ref{unravellinglemma}.4 we must have then
$\mathcal{M}_1, u_r \not\models_\mathcal{L} q^-_{\bar{v}_k}$,
therefore, by Expansion, $\mathcal{M}'_1, u_r
\not\models_\mathcal{L} q^-_{\bar{v}_k}$. Since $w_1$ is the root
of $\mathcal{M}'_1$ and
$(\Xi,Th^-_\mathcal{L}([\mathcal{M}^{un}_w], w))$ is
$\mathcal{L}$-satisfied at $(\mathcal{M}'_1, w_1)$, we must also
have $\mathcal{M}'_1, u_r \models_\mathcal{L} \Xi$, and, in
particular, $\mathcal{M}'_1, u_r \models_\mathcal{L} (r^+_\Gamma
\to r^-_\Delta) \to q^-_{\bar{v}_k}$. Therefore, there must be a
successor $u'$ to $u_r$ in $\mathcal{M}'_1$ such that $(\{
r^+_\Gamma \},\{ r^-_\Delta\})$ is $\mathcal{L}$-satisfied at
$(\mathcal{M}'_1, u')$. Since we will also have $\mathcal{M}'_1,
u' \models_\mathcal{L} \Xi$, this means that $(\Gamma, \Delta)$
will be $\mathcal{L}$-satisfied at $(\mathcal{M}'_1, u')$ as well.
Again by Expansion, we get that $(\Gamma, \Delta)$ will be
$\mathcal{L}$-satisfied at $(\mathcal{M}_1, u')$ and further, by
Lemma \ref{unravellinglemma}.4 , that $(\Gamma, \Delta)$ will be
$\mathcal{L}$-satisfied at $((\mathcal{M}_1)^{un}_{w_1},
(\bar{u}_r, u'))$. By $u_r\mathrel{R_1}u'$, we have that
$(\bar{u}_r, u') \in (W_1)^{un}_{w_1}$ and
$\bar{u}_r\mathrel{(R_1)^{un}_{w_1}}(\bar{u}_r, u')$. Therefore,
setting $u := (\bar{u}_r, u')$, we get our Claim verified.

Observe further, that by the remark after Definition
\ref{D:embedding}, the existence of the above-defined embedding $f$
means that we may assume that $[\mathcal{M}^{un}_w]
\preccurlyeq_\mathcal{L}(\mathcal{M}_1)^{un}_{w_1}$ and identify
$w_1$ with $w$. Repeating this construction $\omega$ times and
taking $\Theta_{[\mathcal{M}^{un}_w]}$-reducts of the resulting
models so that all of them are in the same vocabulary, we obtain
an infinite chain of $\mathcal{L}$-elementary submodels of the
following form:
$$
[\mathcal{M}^{un}_w] \preccurlyeq_\mathcal{L}
(\mathcal{M}_1)^{un}_{w}\preccurlyeq_\mathcal{L},\ldots,
\preccurlyeq_\mathcal{L}
(\mathcal{M}_n)^{un}_{w}\preccurlyeq_\mathcal{L},\ldots,
$$
with $w$ being the root of every model in the chain. To simplify
notation and achieve the uniformity of subscripts, we rename
$[\mathcal{M}^{un}_w]$ as $\mathcal{N}_0$, and for $i \geq 1$,
rename $(\mathcal{M}_i)^{un}_{w}$ as  $\mathcal{N}_i$. We consider
then $\bigcup_{i \in \omega}\mathcal{N}_i$. Since $\mathcal{L}$
has TUP, we know that for every $j \in \omega$ we have
\begin{equation}\label{E:equ1}
\mathcal{N}_j \preccurlyeq_\mathcal{L} \bigcup_{i \in
\omega}\mathcal{N}_i.
\end{equation}
We claim that  $\bigcup_{i \in \omega}\mathcal{N}_i$ is an
$\mathcal{L}$-saturated $\Theta_{[\mathcal{M}^{un}_w]}$-model.
Indeed, assume that $(\Gamma, \Delta)$ is an
$L(\Theta_{[\mathcal{M}^{un}_w]})$-theory which is finitely
$\mathcal{L}$-satisfiable among successors of some $v$ in
$\bigcup_{i \in \omega}\mathcal{N}_i$. Then for any finite
$(\Gamma_0, \Delta_0) \subseteq (\Gamma, \Delta)$ we will have
$$
\bigcup_{i \in \omega}\mathcal{N}_i, v \not\models_\mathcal{L}
\bigwedge\Gamma_0 \to \bigvee \Delta_0.
$$
Then choose $j \in \omega$ such that $\mathcal{N}_j$ is the first
model in the chain, where $v$ occurs.  By \eqref{E:equ1}, we will
have:
$$
\mathcal{N}_j, v \not\models_\mathcal{L} \bigwedge\Gamma_0 \to
\bigvee \Delta_0
$$
for any finite $(\Gamma_0, \Delta_0) \subseteq (\Gamma, \Delta)$.
Therefore, $(\Gamma, \Delta)$ will be finitely
$\mathcal{L}$-satisfiable in $\mathcal{N}_j$ by successors of $v$
and by the respective version of our Claim above, this means that
there is a successor $v'$ to $v$ in $\mathcal{N}_{j + 1}$ such
that $(\Gamma, \Delta)$ is $\mathcal{L}$-satisfied at
$(\mathcal{N}_{j + 1}, v')$. But then, again by \eqref{E:equ1}, we
get that $(\Gamma, \Delta)$ is also $\mathcal{L}$-satisfied at
$(\bigcup_{i \in \omega}\mathcal{N}_i, v')$.

Since $\bigcup_{i \in \omega}\mathcal{N}_i$ is therefore shown to
be an $\mathcal{L}$-saturated
$\Theta_{[\mathcal{M}^{un}_w]}$-model, then setting $\mathcal{N}$
to be $\Theta$-reduct of $\bigcup_{i \in \omega}\mathcal{N}_i$, we
immediately get that $\mathcal{N}$ is $\mathcal{L}$-saturated.
Also, by \eqref{E:equ1} and the fact that $\mathcal{M}^{un}_w$ is
the $\Theta$-reduct of $[\mathcal{M}^{un}_w]$ we get that:
$$
Th_\mathcal{L}(\mathcal{M}^{un}_w, w) =
Th_\mathcal{L}(\mathcal{N}, w).
$$
Therefore, by Lemma \ref{unravellinglemma}.4, we get that
$Th_\mathcal{L}(\mathcal{M}, w) = Th_\mathcal{L}(\mathcal{N}, w)$
and thus we are done.
\end{proof}

We are now in a position to prove Theorem \ref{L:main}. Indeed,
assume the hypothesis of the theorem, and assume, for
contradiction, that $\mathcal{L}\not\equiv\mathsf{IL}$. By
Proposition \ref{L:proposition1}, there must be $\phi \in
L(\Theta_\phi)$ and pointed intuitionistic $\Theta_\phi$-models
$(\mathcal{M}_1, w_1)$, $(\mathcal{M}_2, w_2)$ such that
$Th^+_{IL}(\mathcal{M}_1, w_1) \subseteq Th^+_{IL}(\mathcal{M}_2,
w_2)$ while $\mathcal{M}_1, w_1 \models_\mathcal{L} \phi$ and
$\mathcal{M}_2, w_2 \not\models_\mathcal{L} \phi$. By Proposition
\ref{L:saturation}, take $\mathcal{L}$-saturated
$\Theta_\phi$-models $\mathcal{N}_1$ and $\mathcal{N}_2$ such that
$w_i \in U_i$ and $Th_\mathcal{L}(\mathcal{M}_i, w_i) =
Th_\mathcal{L}(\mathcal{N}_i, w_i)$ for $i \in \{ 0, 1 \}$. We
will have then, of course, that $\mathcal{N}_1, w_1
\models_\mathcal{L} \phi$, but $\mathcal{N}_2, w_2
\not\models_\mathcal{L} \phi$. On the other hand, we will still
have $Th^+_{IL}(\mathcal{N}_1, w_1) \subseteq
Th^+_{IL}(\mathcal{N}_2, w_2)$, whence by Corollary
\ref{L:asimulationscorollary} there must be an asimulation $A$
from $(\mathcal{N}_1, w_1)$ to $(\mathcal{N}_2, w_2)$, but then,
since $\mathcal{L}$ is invariant under asimulations, we must also
have $Th^+_\mathcal{L}(\mathcal{N}_1, w_1) \subseteq
Th^+_\mathcal{L}(\mathcal{N}_2, w_2)$. Now, since $\phi \in
Th^+_\mathcal{L}(\mathcal{N}_1, w_1)$, we can see that $\phi \in
Th^+_\mathcal{L}(\mathcal{N}_2, w_2)$, so that $\mathcal{N}_2, w_2
\models_\mathcal{L} \phi$, which is a contradiction.

\section{A topological approach to the main theorem}\label{S:Top}

There is a topological reading of our result. This sort of take on Lindstr\"om theorems has been explored by some authors, in particular, Caicedo (see \cite{caicedo}). One of the fundamental interests of such an approach is that logical compactness  properties become topological compactness properties. To see this, for each vocabulary $\Theta$, we must first define a topology (to be called the \emph{intuitionistic topology for $\Theta$} $-$indeed, the reference to $\Theta$ will be dropped when the context makes it clear that some   $\Theta$ has been fixed) on the the space of all intuitionistic pointed $\Theta$-models (call it $S$ for the purposes of this section). Abstract logics will then roughly correspond to topologies on such space. We will assume that the reader is familiar with the basic notions of general topology that can be acquired in a place like \cite{kelley}.

 We start by introducing a closed base $\mathsf{B}$ for our topology. The elements of $\mathsf{B}$ have the form $Mod(\phi)$
 for some formula $\phi$ in the usual language $\{\rightarrow, \wedge, \vee, \bot, \top\}$ of intuitionistic propositional logic. $\mathsf{B}$
  is a closed base for a topology on $S$  because for any two $A, B \in \mathsf{B}$ there is $W \in \mathsf{B}$ such that $A \cup B = W$:
   given $Mod(\phi)$ and $Mod(\psi)$, consider $Mod(\phi \vee \psi)$. The intuitionistic topology is just the topology induced by this base,
    that is, take as closed collections arbitrary intersections of elements of $\mathsf{B}$.

Now we may observe that we ended up with a topology where the
closed collections have the form $Mod(T)$ for some intuitionistic
set of formulas $T$. Denote $2^{IL(\Theta)}$ with $\wp(Fmla)$. In
particular, given a subclass $A$ of the space, if we denote by
$Th^+_{IL}(A)$ the collection of all intuitionistic formulas
holding in every member of $A$, then  $A^- $, the closure of $A$
in our topology, is just $\bigcap_{T \in \wp(Fmla) \, \atop{A
\subseteq Mod(T)}} Mod(T) = Mod(Th^+_{IL}(A)) $.

The space is not \emph{normal} (i.e., it is not the case that for any two disjoint closed classes $A,B$ there exist disjoint open classes $U,V$ containing $A$ and $B$ respectively), for let $Mod(T) \cap Mod(U) = \emptyset$ for two intuitionistic theories such that $Mod(T) \neq \emptyset $ and $Mod(U) \neq \emptyset $. Moreover, suppose that  $Mod(T) \subseteq S \setminus Mod(T^{\prime})$, and $Mod(U) \subseteq S \setminus Mod(U^{\prime})$ for some intuitionistic theories $T^{\prime}, U^{\prime}$.  Now take any structure $(\mathcal{M}, w) \in S \setminus Mod(T^{\prime})$ and  $(\mathcal{N}, v) \in S \setminus Mod(U^{\prime})$, but this means that for some $\phi \in \ T^{\prime}$ and $\psi \in U^{\prime}$ we have that $\phi$ and $\psi$  fail at $(\mathcal{M}, w)$ and $(\mathcal{N}, v)$ respectively, so neither $\phi$ nor $\psi$ are theorems of intuitionistic logic. But intuitionistic logic has the disjunction property ($\vdash \phi \vee \psi$ only if either $\vdash \phi$ or $\vdash \psi$), so in fact there is some $(\mathcal{M}^{\prime}, w^{\prime}) \in S$ such that $\phi \vee \psi$ fails, which implies that $(\mathcal{M}^{\prime}, w^{\prime}) \in (S \setminus Mod(T^{\prime})) \cap (S \setminus Mod(U^{\prime}))$.

The space is not  \emph{regular} (i.e., it is not the case that
closed classes and exterior points may be separated by disjoint
open classes). Take  any $Mod(T_0) \neq \emptyset$ and
$(\mathcal{M}, w) \notin Mod(T_0)$, i.e., $\mathcal{M}, w
\not\models_{IL} \phi$ for some $\phi \in T_0$. Consider further
arbitrary open classes   $S \setminus Mod(T_1)$ and $S \setminus
Mod(T_2)$ such that $Mod(T_0) \subseteq S \setminus Mod(T_1)$ and
$(\mathcal{M}, w) \in S \setminus Mod(T_2)$. Hence, taking
$(\mathcal{N}, v) \in Mod(T_0) $, it must be in $S \setminus
Mod(T_1)$. So there are $\psi \in T_1$ and $\theta \in T_2$ such
that $\mathcal{N}, v \not\models_{IL} \psi$ and $\mathcal{M}, w
\not\models_{IL} \theta$. By the disjunction property, we have
$(\mathcal{C}, u) $ falsifying both $\psi$ and $\theta$, hence the
intersection  $(S \setminus Mod(T_1))  \cap (S \setminus
Mod(T_2))$ cannot be empty.

The space is compact because each family of closed sets which has the finite intersection property has a non-empty intersection (this is just the content of the compactness theorem in intuitionistic logic $-$hence Rasiowa's name for this theorem), which is equivalent to saying that every covering of the space can be reduced to a finite subcovering.

Moreover, the space is connected. To see this we again use the disjunction property of intuitionistic logic. For suppose that $S = A \cup B$ for separated subsets $A, B \neq \emptyset$. But then $Mod(Th^+_{IL}(A)) \cap B = \emptyset$ and  $Mod(Th^+_{IL}(B)) \cap A = \emptyset$. Now take $(\mathcal{M}, w) \in A$, so it must be the case that  some $\phi \in Th^+_{IL}(A)$ fails in $(\mathcal{M}, w)$. On the other hand, taking  $(\mathcal{N}, v) \in B$, we obtain an $(\mathcal{N}, v)$ where some $\psi \in Th^+_{IL}(B)$ fails. By the disjunction property, there is a structure $(\mathcal{M}^{\prime}, w^{\prime})$ where both $\phi$ and $\psi$ fail, which means that $(\mathcal{M}^{\prime}, w^{\prime})$ is in neither the closure of $A$ nor the closure of $B$, which is impossible given that $S = A \cup B$.

The space is not Hausdorff because given two models satisfying exactly the same intuitionistic formulas (that is, they belong to exactly the same closed classes of the topology) but which are not isomorphic, so they are distinct, we have a violation of the definition of a Hausdorff  space: given two distinct points in the space we can find disjoint neighborhoods of each of them. In fact, we get with this example that the space is not $T_1$: there are distinct points $x, y$ such that for every open classes $A, B$ of the intuitionistic topology, $x \in A$ only if $y \in A$ and $y \in B$ only if $x \in B$.  Even worse, the space is not even $T_0$: by the same example we have got points $x, y$ such that there is no neighborhood of $x$ where $y$ doesn't belong. Naturally, the latter observation suffices to refute the earlier properties.

Now we look at  a related topology that plays an
important role in our main result. Consider the collection
$\mathsf{SB^{*}}$ containing all collections of the form $S
\setminus Mod(\phi, \bot)$ and $S \setminus Mod(\top,\psi)$ for
all intutionistic formulas $\phi$ and $\psi$. This forms an open
subbase for a topology on $S$.  The open base  $\mathsf{B^{*}}$
generated from $\mathsf{SB^{*}}$ is just all classes of the form
$Mod(\phi, \psi)$ for intuitionistic formulas $\phi$ and $\psi$
(this collection simply contains all finite intersections of
members of $\mathsf{SB^{*}}$). Open classes in this topology are
of the form $\bigcup_{\phi \in \Phi, \psi \in\Psi} Mod(\phi,
\psi)$ for some collections $\Phi, \Psi$ of formulas.

A topological space is said to be \emph{strongly S-closed} if every family of open sets with the finite intersection property has a non-empty intersection \cite{don}. Moreover, we will say that a space is  \emph{almost strongly S-closed} if every family of \emph{basic} open sets with the finite intersection property has a non-empty intersection. 
The topology is almost strongly S-closed this follows from the fact that every cover of $S$ by members of the subbase has a finite subcover . The latter comes from the  intuitionistic compactness
 of intuitionistic logic. Moreover, the topology is uniform, we can obtain it from what we will call (borrowing from \cite{caicedo}) the \emph{canonical uniformity}, namely the uniformity generated by the base formed by  the subclass $ U \subseteq S \times S$ containing:

$$
U_{\Phi} = \{((\mathcal{M}, w), (\mathcal{N}, v)) \mid
\mathcal{M}, w \models_{IL} \phi \ \mbox{iff} \ \mathcal{N}, v
\models_{IL} \phi, \phi \in \Phi \}
$$
for each \emph{ finite} collection $\Phi$ of formulas. Now, the
canonical uniformity generates the previously defined topology of
intuitionistic theories. To see this, first note that
$U_{\Phi}[(\mathcal{M}, w)] = \{(\mathcal{N}, v) \mid
((\mathcal{M}, w), (\mathcal{N}, v)) \in U_\Phi \}$ coincides with
$$
(S\setminus \{(\mathcal{N}, v) \mid  \mathcal{N}, v \models_{IL}
(\bigvee \Phi^-, \bot)\}) \cap (S\setminus \{(\mathcal{N}, v) \mid
\mathcal{N}, v \models_{IL} (\top, \bigwedge \Phi^+)\})
$$
 where
$\Phi^+ = \{\phi \in \Phi \mid \mathcal{M}, w \models_{IL} \phi\}$
while $\Phi^- = \{\phi \in \Phi \mid  \mathcal{M}, w
\not\models_{IL} \phi\}$. Now,  given an open class $\bigcup_{\phi
\in \Phi, \psi \in\Psi} Mod(\phi, \psi)$, if $(\mathcal{M}, w) \in
\bigcup_{\phi \in \Phi, \psi \in\Psi} Mod(\phi, \psi)$, then for
some $\phi, \psi$, we have that $(\mathcal{M}, w) \in Mod(\phi,
\psi)$. But then  $U_{\{\phi, \psi\}}[(\mathcal{M}, w)] \subseteq
Mod(\phi, \psi)$. Hence, open classes of the  previously
considered topology are open in the topology of the canonical
uniformity. Conversely, if $O$ is an open class of the topology of
the canonical uniformity, we can see that $O =
\bigcup_{(\mathcal{M}, w) \in O, \atop{U_{\Phi}[(\mathcal{M}, w)]
\subseteq O, \atop{U_{\Phi} \in U}}} U_{\Phi}[(\mathcal{M}, w)]$.
But the latter is a union of open classes in the previously
considered topology, as we have observed above.

Another space that may be interesting to explore is the quotient
space obtained from the following equivalence relation on $S$: $x
\approx y$ iff $x, y$ belong to exactly the same closed classes of
the intuitionistic topology. Now consider the quotient topology
for the natural projection $\pi: S \longrightarrow S \setminus
\approx$. This topology takes as the closed classes  all those
subclasses $U$ of $S$ such that $\pi^{-1}[U]$ is closed in the
intuitionistic topology.  So the closed classes in this topology
have the form $\{[(\mathcal{M}, w)] \mid (\mathcal{M}, w) \in
Mod(T) \}$ for some intuitionistic set of formulas $T$.

The  quotient space is $T_0$:  for if $[(\mathcal{M}, w)] \neq
[(\mathcal{N}, v)]$ then we have some $\phi$ such that
$(\mathcal{M}, w) \in Mod( \phi)$ while $(\mathcal{N}, v) \notin
Mod( \phi)$, so $[(\mathcal{M}, w)] \in \{[(\mathcal{M}, w)] \mid
(\mathcal{M}, w) \in Mod(\phi) \}$ whereas $[(\mathcal{N}, v)]
\notin \{[(\mathcal{N}, v)] \mid (\mathcal{ N}, v) \in Mod(\phi)
\}$, so there is a neighborhood of one of the points not
containing the other.

On the other hand, the space is not Hausdorff: take
$[(\mathcal{M}, w)] \neq [(\mathcal{N}, v)]$, and notice that for
every open classes $U, V$ such that $[(\mathcal{M}, w)] \in U$ and
$[(\mathcal{N}, v)] \in V$ we have that, for some sets of intuitionistic
formulas $T, O$, we have that $U =  (S \setminus \approx)
\setminus \{[(\mathcal{C}, u)] \mid (\mathcal{C}, u) \in Mod(T)
\}$ and $V = (S \setminus \approx) \setminus \{[(\mathcal{C}, u)]
\mid (\mathcal{C}, u) \in Mod(O) \}$. So $(\mathcal{M}, w) \notin
Mod (\phi)$ and $(\mathcal{N}, v) \notin Mod(\psi)$ for some $\phi
\in T$ and $\psi \in U$, so using the disjunction property, we get
some $(\mathcal{M}^{\prime}, w^{\prime})$ such that
$(\mathcal{M}^{\prime}, w^{\prime}) \notin Mod(\phi \vee \psi)$,
which means that  $[(\mathcal{M}^{\prime}, w^{\prime})] \in U \cap
V$.

Finally, the main theorem of this paper may be read as: \emph{for any vocabulary, the intuitionistic topology is the finest topology such that (i) its closed sets are closed under asimulations, (ii) it has the Tarski-Union property and (iii) the subbase of the topology of the canonical uniformity is such that every open cover of the space by members of the subbase has a finite subcover.}

\section{Conclusion}

We have succeeded in establishing a Lindstr\"om characterization of intuitionistic propositional logic over intuitionistic models involving the properties of  intuitionistic compactness, the TUP and preservation under asimulations. In future work we will show how this characterization can be extended to the case of predicate intuitionistic logic. Our work, of course, opens a world of new questions. A particularly immediate one is to find some other combination of interesting model theoretic properties that implies the three mentioned properties and would therefore give us a new intuitionistic Lindstr\"om theorem.

On the other hand, it would be nice to work out the details of what is the algebraic content of our result via duality theory. Pointed intuitionistic models are dual to Heyting matrices (that is,  pairs formed by a Heyting algebra and a filter on such algebra), so the work reduces to investigate the algebraic counterparts of the TUP and the preservation under asimulations.

\section*{Acknowledgments}
We are grateful to the anonymous referee and the editor of this journal for their time and effort on this paper. This article is a testimony to the eternal indestructible friendship between the Cuban and Soviet peoples. 

Guillermo Badia is supported by the project I 1923-N25 of the Austrian Science Fund (FWF). 
Grigory Olkhovikov is supported by Deutsche
Forschungsgemeinschaft (DFG), project WA 936/11-1.

}
\end{document}